\newtheorem{thm}{Theorem}[section]
\newtheorem{corollary}[thm]{Corollary}
\newtheorem{lemma}[thm]{Lemma}
\newtheorem{proposition}[thm]{Proposition}
\newtheorem{thm-dfn}[thm]{Theorem-Definition}
\theoremstyle{definition}
\newtheorem{definition}[thm]{Definition}
\numberwithin{equation}{section}
\theoremstyle{remark}
\newtheorem{remark}{Remark}[section]
\newtheorem{example}[remark]{Example}
\newcommand{\fg}{{\mathfrak g}}
\newcommand{\bC}{{\mathbb C}}
\newcommand{\mD}{\mathcal{D}}
\newcommand{\mF}{\mathcal{F}}
\newcommand{\mQ}{\mathcal{Q}}
\newcommand{\mO}{\mathcal{O}}
\newcommand{\mG}{\mathcal{G}}
\newcommand{\on}{\operatorname}
\newcommand{\ra}{\rightarrow}
\newcommand{\is}{\simeq}
\newcommand{\Loc}{\on{Loc}}
\newcommand{\Bun}{\on{Bun}}
\newcommand{\quash}[1]{}  
\newcommand{\nc}{\newcommand}
\newcommand{\bbR}{{\mathbb R}}
\newcommand{\calC}{{\mathcal C}}
\newcommand{\calF}{{\mathcal F}}
\newcommand{\calH}{{\mathcal H}}
\newcommand{\calQ}{{\mathcal Q}}
\newcommand{\calS}{{\mathcal S}}
\newcommand{\calY}{{\mathcal Y}}
\nc{\al}{{\alpha}} \nc{\be}{{\beta}} \nc{\ga}{{\gamma}}
\nc{\ve}{{\varepsilon}} \nc{\Ga}{{\Gamma}} 
\nc{\La}{{\Lambda}}
\nc{\ad }{{\on{ad }}}
\nc{\aff}{{\on{aff}}} \nc{\Aff}{{\mathbf{Aff}}}
\nc{\der}{{\on{der}}}
\nc{\diag}{{\on{diag}}}
\nc{\Fl}{{\calF\ell}}
\nc{\Hg}{{\on{Higgs}}}
\newcommand{\id}{{\on{id}}}
\nc{\Id}{{\on{Id}}}
\nc{\Ind}{{\on{Ind}}}
\nc{\Op}{{\on{Op}}}
\nc{\res}{{\on{res}}}
\nc{\tr}{{\on{tr}}}
\newcommand{\GL}{{\on{GL}}}
\nc{\GSp}{{\on{GSp}}} \nc{\GU}{{\on{GU}}} \nc{\SL}{{\on{SL}}}
\nc{\SU}{{\on{SU}}} \nc{\SO}{{\on{SO}}}
\nc{\nh}{{\Loc_{J^p}(\tau')}}
\nc{\bnh}{{\Loc_{\breve J^p}(\tau')}}
\nc{\bU}{{\overline{U}}} \nc{\IC}{{\on{IC}}}
\newcommand{\beqn}{\begin{equation*}}
\newcommand{\eeqn}{\end{equation*}}
\newcommand{\beq}{\begin{equation}}
\newcommand{\eeq}{\end{equation}}
\begin{document}
\title{Drinfeld-Gaitsgory functor and Matsuki duality}
\author{}
        \address{}
        \email{}
         \author{Tsao-Hsien Chen
     }

        \address{School of Mathematics, University of Minnesota, Twin Cities, 206 Church St. S.E., Minneapolis}
         \email{chenth@umn.edu}
\thanks{}
\thanks{}

\maketitle     
\begin{abstract}
Let $G$
be a connected complex reductive group and let 
$K$ be a symmetric subgroup of $G$.
We prove a formula for the Drinfeld-Gaitsgory functor for
the dg-category $\on{Shv}(K\backslash X)$ of 
$K$-equivariant sheaves on the 
flag manifold $X$ of $G$
in terms of the Matsuki duality functor \cite{MUV}.
As byproducts, we obtain a description of the Serre functor for 
$\on{Shv}(K\backslash X)$, generalizing the 
one in \cite{BBM} in the case of  category $\mO$,
and a formula for 
the Deligne-Lusztig duality for $(\fg,K)$-modules.


\end{abstract}
\section{Introduction}
\subsection{}
For any compactly generated  dg-category $\calC$, Drinfeld-Gaitsgory \cite{G}  
introduced a certain
canonical endofunctor
\[\on{Ps-Id}_\calC:\calC\to\calC\]
to be called the Drinfeld-Gaitsgory functor (in \emph{loc. cit.} they called it the pseudo-identity functor)
with many remarkable properties.
Let $G$ be a connected complex reductive group and let 
$K$ be a symmetric subgroup of $G$.
In this paper, we study the Drinfeld-Gaitsgory functor for 
the dg-category $\calC=\on{Shv}(K\backslash X)_\lambda$ of
$\lambda$-twisted
$K$-equivariant $\bC$-constructible sheaves on the 
flag manifold $X$ of $G$.
Here $\lambda$ is a character of Lie algebra of the abstract Cartan group of $G$. In this geometric setting the Drinfeld-Gaitsgory functor 
\[\on{Ps-Id}_{\on{Shv}(K\backslash X)_\lambda}:\on{Shv}(K\backslash X)_\lambda\to \on{Shv}(K\backslash X)_\lambda\] 
has a concrete description: it
is the functor given by the kernel 
\[(\Delta_{K\backslash X})_!(\bC_{K\backslash X})\in\on{Shv}(K\backslash X\times K\backslash X)_{-\lambda,\lambda}\]
where $\bC_{K\backslash X}$ is the constant sheaf 
on the quotient stack $K\backslash X$ and
$\Delta_{K\backslash X}:K\backslash X\to K\backslash X\times K\backslash X$
is the diagonal map (see Section \ref{DG functor}).

The main result of this paper is 
a formula for the Drinfeld-Gaitsgory functor for $\on{Shv}(K\backslash X)_\lambda$ in terms of the so called Matsuki functors in \cite{MUV}.
To explain the formula, we first recall some notations and results in Lie theory.
Let $G_\bbR$ be the real form of $G$ corresponding to 
$K$ under the 
Cartan bijection.
We write $K_\bbR=K\cap G_\bbR$.
The groups $K$ and $G_\bbR$ act naturally on $X$ with finitely many orbits and the celebrated Matsuki correspondence 
says that there is a natural bijection between the 
$K$-orbits and $G_\bbR$-orbits:
\beq\label{Matsuki bijection}
|K\backslash X|\is|G_\bbR\backslash X|.
\eeq
We have the following functors, to be called the Matsuki functors:
\[\Upsilon^{K\ra G_\bbR}_!:=\on{Av}_!^{G_\bbR/K_\bbR}\circ\on{oblv}_{K/K_\bbR}:\on{Shv}(K\backslash X)_{\lambda}\ra \on{Shv}(G_\bbR\backslash X)_{\lambda}\]
\[\Upsilon^{K\ra G_\bbR}_*:=\on{Av}_*^{G_\bbR/K_\bbR}\circ\on{oblv}_{K/K_\bbR}:\on{Shv}(K\backslash X)_{\lambda}\ra \on{Shv}(G_\bbR\backslash X)_{\lambda}\]
\[\Upsilon^{G_\bbR\ra K}_!:=\on{Av}_!^{K/K_\bbR}\circ\on{oblv}_{G_\bbR/K_\bbR}:\on{Shv}(G_\bbR\backslash X)_{\lambda}\ra \on{Shv}(K\backslash X)_{\lambda}\]
\[\Upsilon^{G_\bbR\ra K}_*:=\on{Av}_*^{K/K_\bbR}\circ\on{oblv}_{G_\bbR/K_\bbR}:\on{Shv}(G_\bbR\backslash X)_{\lambda}\ra \on{Shv}(K\backslash X)_{\lambda}.\]
Here for any Lie group $H$ and a Lie subgroup $H'\subset H$, the functor 
$\on{oblv}_{H/H'}:\on{Shv}(H\backslash X)_{\lambda}\to\on{Shv}(H'\backslash X)_{\lambda}$ is the natural forgetful functor
and $\on{Av}_*^{H/H'}$ (resp. $\on{Av}_!^{H/H'}$) is its right (resp. left) adjoint.
In \cite{MUV}, the authors updated the Matsuki correspondence~\eqref{Matsuki bijection} to the level of sheaves:
\beq\label{M functor}
\text{
The Matsuki functor 
$\Upsilon^{K\ra G_\bbR}_*$ (resp. $\Upsilon_!^{G_\bbR\ra K}$)
is an equivalence with inverse given by}
\eeq
\[
\text{$\Upsilon_!^{G_\bbR\ra K}$ (resp. $\Upsilon_*^{G_\bbR\ra K}$).}\]

The following theorem is the main result of the note.
Write $\on{Ps-Id}_{K\backslash X}=\on{Ps-Id}_{\on{Shv}(K\backslash X)_\lambda}$ for the Drinfeld-Gaitsgory functor for 
$\on{Shv}(K\backslash X)_\lambda$.
For simplicity, we assume $K$ is connected.
\begin{thm}[Theorem \ref{main 1 twisted}]\label{formula 1}
There is a canonical isomorphism of functors $\on{Shv}(K\backslash X)_\lambda\ra \on{Shv}(K\backslash X)_\lambda$:
\[\on{Ps-Id}_{K\backslash X}\is\Upsilon_!^{G_\bbR\ra K}\circ\Upsilon^{K\ra G_\bbR}_![-\dim_\bbR(X)+\dim_\bbR(K_\bbR)].\]
\end{thm}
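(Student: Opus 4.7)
The plan is to reduce the theorem, via the Matsuki correspondence, to a more tractable comparison formula that can be attacked with the general Drinfeld--Gaitsgory formalism. By the equivalence \eqref{M functor}, $\Upsilon^{G_\bbR\to K}_!$ is the inverse of $\Upsilon^{K\to G_\bbR}_*$; applying $\Upsilon^{K\to G_\bbR}_*$ to both sides of the desired isomorphism gives the equivalent statement
\[
\Upsilon^{K\to G_\bbR}_*\circ\on{Ps-Id}_{K\backslash X}\;\is\;\Upsilon^{K\to G_\bbR}_![-\dim_\bbR(X)+\dim_\bbR(K_\bbR)].
\]
Both sides factor through $\on{oblv}_{K/K_\bbR}$ followed by a pushforward to $\on{Shv}(G_\bbR\backslash X)_\lambda$, so the remaining discrepancy is exactly between $\on{Av}_*^{G_\bbR/K_\bbR}$ and $\on{Av}_!^{G_\bbR/K_\bbR}$, modified by $\on{Ps-Id}_{K\backslash X}$ on the left.

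The plan for establishing this reformulated identity is to exploit the key property of the Drinfeld--Gaitsgory functor from \cite{G}: $\on{Ps-Id}_\calC$ is the canonical intertwiner between the two natural self-duality structures on a compactly generated dg-category $\calC$. For any continuous functor $F\colon\calC\to\calD$ with continuous left and right adjoints $F^L,F^R$, this yields a canonical comparison of the form $F^R\circ\on{Ps-Id}_\calD\is\on{Ps-Id}_\calC\circ F^L$ (up to the appropriate normalizations). Apply this in succession to the forgetful functor $\on{oblv}_{K/K_\bbR}$ (with adjoints $\on{Av}_!^{K/K_\bbR}$ and $\on{Av}_*^{K/K_\bbR}$) and then to $\on{oblv}_{G_\bbR/K_\bbR}$ (with adjoints $\on{Av}_!^{G_\bbR/K_\bbR}$ and $\on{Av}_*^{G_\bbR/K_\bbR}$). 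This moves $\on{Ps-Id}_{K\backslash X}$ through the chain of forgets and averagings, replacing it by intermediate pseudo-identities on $\on{Shv}(K_\bbR\backslash X)_\lambda$ and $\on{Shv}(G_\bbR\backslash X)_\lambda$. One then expects these intermediate contributions to collapse into a single cohomological shift.

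The main technical obstacle I expect is the precise bookkeeping of shifts in the $\lambda$-twisted constructible setting. The fact that $[-\dim_\bbR(X)+\dim_\bbR(K_\bbR)]$ coincides with $[-\dim_\bbR(K_\bbR\backslash X)]$ strongly suggests that the decisive contribution is $\on{Ps-Id}_{K_\bbR\backslash X}$, which is essentially the pure shift $[-\dim_\bbR(K_\bbR\backslash X)]$ insofar as $K_\bbR\backslash X$ behaves like a smooth real-analytic stack; the contributions from $\on{Ps-Id}_{G_\bbR\backslash X}$ and from the $\on{Av}_!$-versus-$\on{Av}_*$ discrepancy for $p\colon K_\bbR\backslash X\to G_\bbR\backslash X$ should then cancel. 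A complementary, more geometric, route is to compute both sides of the original isomorphism directly as convolution with kernels on $K\backslash X\times K\backslash X$: by definition $\on{Ps-Id}_{K\backslash X}$ has kernel $(\Delta_{K\backslash X})_!(\bC_{K\backslash X})$, while base change identifies $\Upsilon^{G_\bbR\to K}_!\circ\Upsilon^{K\to G_\bbR}_!$ with convolution by the pushforward of $\bC$ from the ``Matsuki stack'' $K_\bbR\backslash X\times_{G_\bbR\backslash X}K_\bbR\backslash X$. Proving that these two kernels agree (up to the predicted shift) then becomes a concrete geometric statement about the interplay of $K$- and $G_\bbR$-orbits on $X$, whose essential content is the Matsuki bijection \eqref{Matsuki bijection}.
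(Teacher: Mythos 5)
Your first move --- peeling off one Matsuki functor to turn the statement into a comparison of $\Upsilon_*$ against $\Upsilon_!$ precomposed with $\on{Ps-Id}$ --- is the same kind of reduction the paper makes (the paper cancels $\Upsilon_!^{K\to G_\bbR}$ from the right and lands on an identity of functors $\on{Shv}(G_\bbR\backslash X)_\lambda\to\on{Shv}(K\backslash X)_\lambda$, whereas you apply $\Upsilon_*^{K\to G_\bbR}$ from the left and land in $\on{Shv}(G_\bbR\backslash X)_\lambda$; both are legitimate starting points). The problem is what comes next.

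The central claim that for any continuous functor $F$ with continuous adjoints one has a canonical \emph{isomorphism} $F^R\circ\on{Ps-Id}_\calD\is\on{Ps-Id}_\calC\circ F^L$ (``up to the appropriate normalizations'') is false, and it is precisely where all the real content of the theorem lives. What is true --- and what the paper's Lemma~\ref{lemma 1} establishes, in the specific geometric form relevant here --- is that there is a canonical \emph{morphism}
\[
(-\otimes\ell_{H\backslash X})\circ\on{Av}_!^{H/U}\;\longrightarrow\;\on{Ps-Id}_{H\backslash X}\circ\on{Av}_*^{H/U}[d_X-d_{U_c}],
\]
and this morphism is an isomorphism \emph{only when evaluated on objects that are transversal} with respect to the projection $U\backslash X\to H\backslash X$. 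The transversality hypothesis is not a normalization issue to be bookkept away: it is where the defining condition of a Matsuki datum (that $H^+$-orbits and $H^-$-orbits on $X$ meet transversally) enters, via Example~\ref{example transversal}, and the isomorphism is ultimately driven by the non-characteristic deformation estimate \cite[Proposition 5.4.13]{KSa}. If the asserted general isomorphism held for arbitrary continuous $F$, there would be nothing left to prove and no role for the Matsuki geometry at all. So your proposed ``move $\on{Ps-Id}$ through the chain of forgets and averagings and watch the intermediate pseudo-identities collapse'' does not go through: at each step you would only have a map, and without a transversality/ULA-type input you cannot upgrade it to an isomorphism, nor identify the resulting shift with $[-\dim_\bbR(X)+\dim_\bbR(K_\bbR)]$.

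Your ``complementary geometric route'' is in fact the correct direction --- the paper's proof of Lemma~\ref{lemma 1} is a kernel/base-change computation culminating in the non-characteristicity argument --- but as stated it is only a pointer. The assertion that the kernel identity ``becomes a concrete geometric statement whose essential content is the Matsuki bijection'' undersells what is needed: the bijection of orbits alone does not give the kernel identity; one needs the transversality of the two orbit stratifications (condition (3) of a Matsuki datum) to bound singular supports and invoke \cite{KSa}. To repair the argument you should: (i) state and prove the morphism-not-isomorphism comparison in the form of Lemma~\ref{lemma 1}; (ii) verify via singular support (Example~\ref{example transversal}) that objects in the image of $\on{oblv}_{G_\bbR/K_\bbR}$ are transversal for $K_\bbR\backslash X\to K\backslash X$; and (iii) feed this into your reduced identity. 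As written, the proposal has a genuine gap at step (i).
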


It was shown in \cite[Theorem 2.1.5]{GY} that 
$\on{Ps-Id}_{K\backslash X}$ is 
isomorphic to the inverse 
of the 
Serre functor $\on{Se}_{K\backslash X}:\on{Shv}(K\backslash X)_\lambda\ra \on{Shv}(K\backslash X)_\lambda$.
Thus, in view of~\eqref{M functor}, we obtain 
\begin{corollary} [Theorem \ref{main 2 twisted}]\label{formula 2}
There is a canonical isomorphism of functors $\on{Shv}(K\backslash X)_\lambda\ra \on{Shv}(K\backslash X)_\lambda$:
\[\on{Se}_{K\backslash X}\is\Upsilon_*^{G_\bbR\ra K}\circ\Upsilon^{K\ra G_\bbR}_*[\dim_\bbR(X)-\dim_\bbR(K_\bbR)].\]
\end{corollary}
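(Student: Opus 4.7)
The plan is to deduce the Corollary as a formal consequence of Theorem~\ref{formula 1}, using two inputs already stated in the introduction: the identification of $\on{Ps-Id}_{K\backslash X}$ with the inverse of the Serre functor $\on{Se}_{K\backslash X}$ from \cite[Theorem 2.1.5]{GY}, and the Matsuki duality equivalences of \cite{MUV} recorded in~\eqref{M functor}.

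Concretely, I would invert both sides of the isomorphism of Theorem~\ref{formula 1}. On the left, the inverse of $\on{Ps-Id}_{K\backslash X}$ is $\on{Se}_{K\backslash X}$ by \cite[Theorem 2.1.5]{GY}. On the right, the cohomological shift $[-\dim_\bbR(X)+\dim_\bbR(K_\bbR)]$ flips sign, and the composition must be reversed: $(\Upsilon_!^{G_\bbR\ra K}\circ\Upsilon^{K\ra G_\bbR}_!)^{-1}\is(\Upsilon^{K\ra G_\bbR}_!)^{-1}\circ(\Upsilon_!^{G_\bbR\ra K})^{-1}$. By~\eqref{M functor} together with its companion $!/*$-swapped equivalence, these individual inverses are $\Upsilon_*^{G_\bbR\ra K}$ and $\Upsilon^{K\ra G_\bbR}_*$ respectively. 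Assembling the pieces yields precisely the stated formula $\on{Se}_{K\backslash X}\is\Upsilon_*^{G_\bbR\ra K}\circ\Upsilon^{K\ra G_\bbR}_*\,[\dim_\bbR(X)-\dim_\bbR(K_\bbR)]$.

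There is no substantive obstacle here: with Theorem~\ref{formula 1} granted, the argument is purely formal. The only point that demands attention is bookkeeping — the reversal of composition order under inversion (so that the inner functor $\Upsilon^{K\ra G_\bbR}_!$ of the Theorem is paired, via its inverse, with the outer functor $\Upsilon_*^{G_\bbR\ra K}$ of the Corollary), the sign flip of the shift, and correctly matching the $!$- and $*$-halves of the Matsuki duality so that $\Upsilon^{K\ra G_\bbR}_!$ is paired with $\Upsilon_*^{G_\bbR\ra K}$ rather than with another $!$-averaging.
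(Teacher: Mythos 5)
Your proposal is correct and follows essentially the same route as the paper. The paper's proof of Theorem \ref{main 2} (of which Corollary \ref{formula 2} is the twisted, connected-$K$ specialization) also proceeds by inverting the $\on{Ps-Id}$ formula: it records that $\on{Ps-Id}_{H^+\backslash X}$ and $\on{Se}_{H^+\backslash X}$ are mutually inverse (citing \cite[Example 3.3.8 and Corollary 3.4.7]{AGKRRV2}, whereas you cite \cite[Theorem 2.1.5]{GY} as in the introduction -- both are fine), then inverts the composition on the right using Theorem \ref{Matsuki} applied to both $(G,X,H^+,H^-)$ and the swapped datum $(G,X,H^-,H^+)$, which is exactly your ``companion $!/*$-swapped equivalence.'' The one ingredient your write-up omits, because it is invisible in the connected-$K$ statement of Corollary \ref{formula 2}, is the handling of the twist $(-\otimes\ell_{H^+\backslash X})$ in the general Matsuki-datum version (Theorem \ref{main 1 twisted}/\ref{main 2 twisted}): the paper's proof additionally uses $\ell_{H^+\backslash X}\otimes\ell_{H^+\backslash X}\is\bC_{H^+\backslash X}$ so that this factor is its own inverse, and then Remark \ref{modification} trivializes it when $K$ is connected to recover the shape stated in the Corollary. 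If you want your derivation to cover the general twisted statement rather than only the simplified form in the introduction, you should insert that one extra step.
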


In fact, Theorem \ref{main 1 twisted} and Theorem \ref{main 2 twisted}
are in a more general setting 
of \emph{Matsuki datum}, see Section \ref{Matsuki duality}.

\begin{remark}
If $K$ (equivalently $K_\bbR$ or $G_\bbR$) is disconnected, one need to modify the formulas above for $\on{Ps-Id}_{K\backslash X}$ and
$\on{Se}_{K\backslash X}$ by
inserting the functor $(-)\otimes\ell_{K\backslash X}$
of tensoring with a certain rank one local system on 
$K\backslash X$ coming from the orientation sheaf $\on{or}_{K_\bbR\backslash X}$ of $K_\bbR\backslash X$, see Remark \ref{twisting}.
When $K$ is connected, $\on{or}_{K_\bbR\backslash X}$ is trivial 
and we do not need such modification, see Remark \ref{modification}.

\end{remark}

\begin{remark}
The formulas above for the Drinfeld-Gaitsgory functor and the Serre functor 
generalize the one in
\cite[Theorem 3.4.2]{CGY} and \cite[Proposition 2.5]{BBM}
to the setting of real groups.
Indeed, let $N$ be the
unipotent radical of a Borel subgroup of $G$
and let $N^-$ be its opposite unipotent radical.
In~\emph{loc. cit.} the authors showed that the 
Drinfeld-Gaitsgory functor $\on{Ps-Id}_{N\backslash X}$
and the Serre functor $\on{Se}_{N\backslash X}$ for the category $\on{Shv}(N\backslash X)$
of sheaves on the stack $N\backslash X$ (equivalently, the category $\mO$) are isomorphic to
\[\on{Ps-Id}_{N\backslash X}\is\Upsilon_!^{N^-\to N}\circ\Upsilon_!^{N\to N^-}[-2\dim_\bC(X)]\ \ \ \ \on{Se}_{N\backslash X}\is\Upsilon_*^{N^-\to N}\circ\Upsilon_*^{N\to N^-}[2\dim_\bC(X)]\] 
where the functors $\Upsilon_!^{N\to N^-},\Upsilon_*^{N\to N^-}:\on{Shv}(N\backslash X)\to\on{Shv}(N^{-}\backslash X)$
are the so called long-intertwining functors.
Our results 
suggest that the Matsuki functors are ``real group'' analog
of the long-intertwining functors.
\end{remark}

\subsection{A formula for the Deligne-Lusztig duality for $(\fg,K)$-modules }
In \cite{GY}, Gaitsgory-Yom Din 
introduced an analog of the \emph{Deligne-Lusztig duality} for $(\fg,K)$-modules
as the composition of the 
canonical duality $\mathbb D^{\text{can}}$ and the contragrediant duality $\mathbb D^{\text{contr}}$:
\beq
\mathbb D^{\text{can}}\circ\mathbb D^{\text{contr}}:\fg\on{-mod}^K_\chi\to \fg\on{-mod}^K_\chi.
\eeq
The main result in~\emph{loc.cit.} says that 
the Drinfeld-Gaitsgory functor
$\on{Ps-Id}_{\fg\on{-mod}^K_\chi}$ for $(\fg,K)$-modules
is isomorphic to the Deligne-Lusztig functor
\beq\label{DL functor}
\on{Ps-Id}_{\fg\on{-mod}^K_\chi}\is\mathbb D^{\text{can}}\circ\mathbb D^{\text{contr}}
\eeq
Since the category $\fg\on{-mod}^K_\chi$ is equivalent to $\on{Shv}(K\backslash X)_{\lambda}$ 
via the localization theorem and the Riemann-Hilbert correspondence (or rather the dg-category version, see, e.g., \cite{P}), Theorem \ref{formula 1} gives 
a formula for the Deligne-Lusztig functor 
for $(\fg,K)$-modules in terms of Matsuki functors:

\begin{corollary}\label{formula for DL}
We have an isomorphism of functors on $\fg\on{-mod}^K_{\chi}\is\on{Shv}(K\backslash X)_\lambda$:
\[\mathbb D^{\on{can}}_{}\circ\mathbb D^{\on{contr}}\is\Upsilon_!^{G_\bbR\ra K}\circ\Upsilon^{K\ra G_\bbR}_![-\dim_\bbR(X)+\dim_\bbR(K_\bbR)]\]
\end{corollary}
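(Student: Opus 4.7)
The plan is to reduce the statement to Theorem \ref{formula 1} via two inputs already available in the excerpt: the dg-categorical Beilinson-Bernstein localization composed with the (dg-enhanced) Riemann-Hilbert correspondence, providing an equivalence $\fg\on{-mod}^K_\chi\is\on{Shv}(K\backslash X)_\lambda$, and the Gaitsgory-Yom Din identification \eqref{DL functor} expressing the Deligne-Lusztig functor on the module side as the Drinfeld-Gaitsgory functor.

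First I would invoke the formal but crucial fact that $\on{Ps-Id}_\calC$ is a \emph{canonical} invariant of a compactly generated dg-category $\calC$, and is therefore transported through any equivalence of compactly generated dg-categories to $\on{Ps-Id}$ of the target. Applied to the equivalence $\fg\on{-mod}^K_\chi\is\on{Shv}(K\backslash X)_\lambda$, this yields a canonical identification
\[\on{Ps-Id}_{\fg\on{-mod}^K_\chi}\is\on{Ps-Id}_{K\backslash X}.\]
Combining with \eqref{DL functor} on the left side gives $\mathbb D^{\on{can}}\circ\mathbb D^{\on{contr}}\is\on{Ps-Id}_{K\backslash X}$ under the equivalence.

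Next I would simply substitute the formula from Theorem \ref{formula 1} on the right, obtaining
\[\mathbb D^{\on{can}}\circ\mathbb D^{\on{contr}}\is\Upsilon_!^{G_\bbR\ra K}\circ\Upsilon^{K\ra G_\bbR}_![-\dim_\bbR(X)+\dim_\bbR(K_\bbR)],\]
which is the desired statement.

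The only delicate point is to be sure that the equivalence $\fg\on{-mod}^K_\chi\is\on{Shv}(K\backslash X)_\lambda$ holds at the level of cocomplete compactly generated dg-categories (so that the Drinfeld-Gaitsgory formalism applies and is preserved), rather than merely at the level of triangulated or abelian categories; for this I would cite the dg-enhancement of Riemann-Hilbert used in \cite{P} together with the standard dg-lift of Beilinson-Bernstein localization. Beyond verifying this naturality, the argument is a purely formal concatenation, with all of the actual content already absorbed into Theorem \ref{formula 1} and \eqref{DL functor}, so I do not anticipate a genuine obstacle.
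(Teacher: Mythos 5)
Your argument matches the paper's: the corollary is obtained exactly by transporting the Gaitsgory--Yom Din identification \eqref{DL functor} through the dg-enhanced localization and Riemann--Hilbert equivalence $\fg\on{-mod}^K_\chi\is\on{Shv}(K\backslash X)_\lambda$ and then substituting Theorem \ref{formula 1}. You make explicit the (correct, formal) point that $\on{Ps-Id}$ is intrinsic to a compactly generated dg-category and hence transported by any such equivalence, which the paper leaves implicit.
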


\begin{remark}
It is interesting to note that the formula above for the Deligne-Lusztig duality 
for $(\fg,K)$-modules
is not algebraic but only \emph{real analytic}; as it involves the Matsuki functors 
going from $\on{Shv}(K\backslash X)_\lambda$ to $\on{Shv}(G_\bbR\backslash X)_\lambda$.
A similar phenomenon also appeared in the work of Barlet-Kashiwara \cite{BK}, where the authors
described the functor on $\on{Shv}(K\backslash X)_\lambda$ which corresponds
to the contragradient duality $\mathbb D^{\text{contr}}$ for $(\fg,K)$-modules and they 
also observed that the functor is  not algebraic but only real analytic. 
It will be interesting to compare the formula obtained here and the one in~\emph{loc. cit.}.
\end{remark}

\begin{remark}
It would be nice to have an analog of Corollary \ref{formula for DL}
for $G_\bbR$-representations.
Note that we have similar formulas in the 
real setting: 
the Drinfeld-Gaitsgory and 
the Serre functor for  $\on{Shv}(G_\bbR\backslash X)_\lambda$
are isomorphic to:
\[\on{Ps-Id}_{G_\bbR\backslash X}\is\Upsilon_!^{K\ra G_\bbR}\circ\Upsilon^{G_\bbR\ra K}_![-\dim_\bbR(X)+\dim_\bbR(K_\bbR)]\]
\[\on{Se}_{G_\bbR\backslash X}\is\Upsilon_*^{K\ra G_\bbR}\circ\Upsilon^{G_\bbR\ra K}_*[\dim_\bbR(X)-\dim_\bbR(K_\bbR)].
\]
In \cite{KSd}, Kashiwara-Schmid constructed a functor from 
$\on{Shv}(G_\bbR\backslash X)_\lambda$ to (certain) derived category of 
$G_\bbR$-representations, known as Kashiwara-Schmid
localization. Similar to the case of $(\fg,K)$-modules, we expect that the Kashiwara-Schmid
localization intertwines the 
Drinfeld-Gaitsgory functor 
$\on{Ps-Id}_{G_\bbR\backslash X}$
with 
the composition $\mathbb D^{\on{coh}}\circ\mathbb D^{\on{contr}}$
where 
$\mathbb D^{\on{contr}}$ and $\mathbb D^{\on{coh}}$ are the 
the contragredient duality 
and (some version) of cohomological duality for 
$G_\bbR$-representations respectively.
At the level of virtual $G_\bbR$-representations (up to infinitesimal equivalence),
this follows from the 
fact that the Matsuki functor intertwines the Kashiwara-Schmid
localization with the Beilinson-Bernstein localization  (see, e.g., \cite[Section 2]{SV}).
\end{remark}
\subsection{Further directions}

\subsubsection{Theory of tilting perverse sheaves}
In \cite{BBM}, the authors use 
the theory of tilting perverse sheaves 
on $N\backslash X$ to derive the formula for Serre functor 
for category $\mO$. It would be nice if one can do the same for $(g,K)$-modules:
study tilting perverse sheaves on $K\backslash X$ or $G_\bbR\backslash X$ and explore its connection with the Drinfeld-Gaitsgory functor and the Serre functor.

\subsubsection{Affine Matsuki duality}
In \cite{CN}, Nadler and the author prove a affine version of the Matsuki duality, to be called the affine Matsuki duality, which relates the category 
of sheaves on the moduli stack $\Bun_{G_\bbR}$ of
real bundles over real projective line with the category 
spherical sheaves on the loop space $X((t))$ of the symmetric variety $X=G/K$.
The results of the paper suggest that there is a close relationship between 
the Drinfeld-Gaitsgory functor for those categories 
and the affine Matsuki duality. Note that $\Bun_{G_\bbR}$
and $X((t))$ are the main players for 
the geometric Langlands for real groups (proposed by Ben-Zvi and Nadler)
and derived Satake equivalence for symmetric varieties (proposed by 
Ben-Zvi, Venkatesh, and Sakellaridis), and we expect that the
affine Matsuki duality (and its relation with the Drinfeld-Gaitsgory functor)
would help us to understand some ``duality'' phenomena in both subjects.

\subsection{Acknowledgements} 
I am grateful to D. Gaitsgory and A. Yom Din 
 for the collaboration \cite{CGY}.
The research is supported by NSF grant DMS-2001257 and the S. S. Chern Foundation.

\section{constructible sheaves on a semi-analytic stack}
\subsection{}
We will be working will $\bC$-linear dg-categories (see \cite[Section 0.6]{DG} for a concise summary of the theory of DG categories). Unless specified otherwise, all dg-categories will be assume cocomplete, i.e., contains all small colimits,
and all functors between dg-categories will be assumed continous, i.e., preserves all small colimits.

Recall that a subset $S$ of a 
real analytic manifold $M$ 
is called semi-analytic if
any point $s\in S$ has a open neighbourhood $U$ 
such that the intersection $S\cap U$ is a finite union of sets of the form
\[\{s\in U|f_1(s)=\cdot\cdot\cdot=f_r(s)=0, g_1(s)>0,..., g_l(s)>0\},\]
where the $f_i$ and $g_j$ are real analytic functions on 
$U$.
A map $f:S\to S'$ between two semi-analytic sets is called semi-analytic if
it is continuous and its graph is a semi-analytic set.

We collect some facts about constructible sheaves on a semi-analytic stack following \cite[Appendix C]{AGKRRV1}.\footnote{In \emph{loc. cit.}, the authors work with constructible sheaves on a  
complex semi-analytic stack but the discussion works for real semi-analytic stacks.}
For any semi-analytic set $S$, 
we define $\on{Shv}(S)=\on{Ind}(\on{Shv}(S)^{\on{constr}})$ to be the ind-completion
of the 
 bounded dg-category $\on{Shv}(S)^{\on{constr}}$ of $\bC$-constructible sheaves on 
$S$. For any semi-analytic stack $\calS$ 
we define $\on{Shv}(\calS):=\underset{S}{\on{lim}}\ \on{Shv}(S)$
where the index category is that of semi-analytic sets equipped with a semi-analytic map
to $\calS$, and the transition functors are given by $!$-pullback.
Since we are in the constructible context  $!$-pullback admits left adjoint, given by 
$!$-pushforward, and it follows that 
$\on{Shv}(\calS)=\underset{S}{\on{colim}}\on{Shv}(S)$. In particular,
$\on{Shv}(\calS)$ is compactly generated. 
We let $\on{Shv}(\calS)^c$ be the full subcategory of compact objects
and $\on{Shv}(\calS)^{\text{constr}}\subset\on{Shv}(\calS)$ be the full subcategory
of objects that pullback to an object of $\on{Shv}(S)^{\on{constr}}$ for any $S$ mapping to $\calS$.
Note that we have natural inclusion $\on{Shv}(\calS)^c\subset\on{Shv}(\calS)^{\text{constr}}$
but the inclusion is in general
not an equality. For example, the constant sheaf $\bC_{\calS}\in\on{Shv}(\calS)^{\text{constr}}$  
for the classifying stack $\calS=B(\GL_1(\bC))$ is not compact.

Let $f:\calS\to\calS'$ be a map between semi-analytic stacks.
We have the usual six functor formalism
$f_*$, $f^!$, $f_*$, $f_!$, $\otimes$, $\underline{\on{Hom}}$.
For any $\mF_1,\mF_2\in\on{Shv}(\calS)$ we denote by
$\mF_1\otimes^!\mF_2:=\Delta^!_\calS(\mF_1\boxtimes\mF_2)$,
where $\Delta_\calS:\calS\to\calS\times\calS$ be the diagonal map.

\subsection{Renormalized direct image}
 The usual direct image functor 
$f_*:\on{Shv}(\calS)\to\on{Shv}(\calS')$ is in general not continuous
(see, e.g., \cite[Example 7.1.4]{DG}). 
We define the \emph{renormalized} direct image
\beq\label{ren}
f_{*,\text{ren}}:\on{Shv}(\calS)\to\on{Shv}(\calS')
\eeq
to be the unique colimit-preserving functor such that 
$f_{*,\text{ren}}=f_*$ on $\on{Shv}(\calS)^c$.

Following \cite[Appendix C]{AGKRRV1},  a semi-analytic stack $\calS$ is called 
\emph{duality-adapted} if 
the Verdier duality functor 
\[\mathbb D:(\on{Shv}(\calS)^{\text{constr}})^{\on{op}}\is\on{Shv}(\calS)^{\text{constr}}\]
sends $\on{Shv}(\calS)^{c}$ to $\on{Shv}(\calS)^{c}$. 
In this paper we are mainly interested in 
semi-analytic stacks $\calS$ of the form 
$\calS=H\backslash S$ where $H$ is Lie group acting semi-analytically on a semi-analytic set 
$S$. According to  \cite[Theorem C.2.6]{AGKRRV1}, any such stack is duality-adapted.
By \cite[C.3.6]{AGKRRV1}, given a map 
$f:\calS\to\calS'$, where $\calS$ is duality-adapted, the functor 
$f_{*,\on{ren}}$ satisfies the projection formula 
\[f_{*,\on{ren}}(\mF)\otimes^!\mF'\is f_{*,\on{ren}}(\mF\otimes^!f^!(\mF'))\]
and base change: for a Cartesian diagram
\[\xymatrix{\calS_1\ar[r]^{g_1}\ar[d]_{f_1}&\calS_2\ar[d]^{f_2}\\
\calS'_1\ar[r]_{g_2}&\calS_2'}\]
we have 
\[g_2^!\circ f_{2*,\on{ren}}\is f_{1*,\on{ren}}\circ g_1^!.\]
\begin{remark}
Note that 
the usual direct image functor 
$f_*$ for a map $f:\calS\to\calS'$ between semi-analytic stacks might not satisfy the projection formula or base change 
(see \cite[Example 7.7.6]{DG}) and this is one of the reason 
to introduce the renormalized version $f_{*,\on{ren}}$.
However, according to \cite[Corollary 10.2.5]{DG}, if the map $f$ is schematic (or more generally, the map is ``safe'' as defined in \emph{loc. cit.}) then 
we have $f_*=f_{*,\on{ren}}$ and hence $f_*$ satisfies 
the projection formula or base change.

\end{remark}

\subsection{Orientation sheaf}
Let $f:\calS\to\calS'$ be a map between semi-analytic stacks.
We will write $d_f=\dim_\bbR\calS-\dim_\bbR\calS'$ for the relative dimension of $f$.
The complex $\omega_f:=f^!\bC_{\calS'}$ is called the (relative) dualizing complex for $f$.
If $f$ is smooth then we have $\omega_f\is\on{or}_f[d_f]$ where 
$\on{or}_f:=\calH^{-d_f}(\omega_f)$ is a local system on 
$\calS$ called the (relative) orientation sheaf of $f$. 
Note that we have $\on{or}_f\otimes\on{or}_f\is\bC_\calS$.
Moreover, have a canonical isomorphism of functors
\beq\label{smooth pull back}
f^*\otimes\omega_f\is f^*\otimes\on{or}_f[d_f]\is f^!.
\eeq
In the case when $S'=\on{pt}$ we will write 
$d_\calS=d_f$,
$\omega_\calS=\omega_f$, $\on{or}_\calS=\on{or}_f$, etc. 
\subsection{Averaging functors}
Let $X$ be a real analytic manifold acted on by 
a Lie group $H$. Let $U\subset H$ be Lie subgroup 
and consider the quotient map
$f:U\backslash X\to H\backslash X$.
We have the forgetful functor
\[\on{oblv}_{H/U}=f^*:\on{Shv}(H\backslash X)\to\on{Shv}(U\backslash X)\]
with right adjoint 
\[\on{Av}_*^{H/U}=f_*:\on{Shv}(U\backslash X)\to\on{Shv}(H\backslash X)\]
(note that $f_*=f_{*,\on{ren}}$ as $f$ is schematic) and left adjoint
\[\on{Av}_!^{H/U}=f_!\circ((-)\otimes\on{or}_f)[d_f]:\on{Shv}(U\backslash X)\to\on{Shv}(H\backslash X)\]
here $(-)\otimes\on{or}_f:\on{Shv}(U\backslash X)\to\on{Shv}(U\backslash X)$ is the functor of tensoring with the 
orientation sheaf $\on{or}_f$.

\section{The Drinfeld-Gaitsgory functor}\label{DG functor}
\subsection{}
We recall the definition of Drinfeld-Gaitsgory functor 
in the setting of constructible sheaves following \cite{AGKRRV2}.
Let $\calS$ be semi-analytic stack. 
An object $\mQ\in\on{Shv}(\calS\times\calS)$ defines a functor
\[F_\mQ:\on{Shv}(\calS)\to\on{Shv}(\calS),\ \ \mF\to (p_2)_{*,\on{ren}}(\mQ\otimes^!(p_1)^!(\mF))\] 
where
$\calS\stackrel{p_1}\longleftarrow\calS\times
\calS\stackrel{p_2}\longrightarrow\calS$ are the projection maps.
We shall call $F_\calQ$ the functor given by the kernel $\mQ$.
Consider the diagonal embedding 
$\Delta_\calS:\calS\to\calS\times\calS$.
It is straightforward to check that the identity functor 
\[\on{Id}_\calS:\on{Shv}(\calS)\to\on{Shv}(\calS)\]
is given by the kernel $(\Delta_\calS)_*(\omega_\calS)$.
The \emph{Drinfeld-Gaitsgory functor}:
\[\on{Ps-Id}_{\calS}:\on{Shv}(\calS)\to\on{Shv}(\calS)\]
is by definition the functor given by the 
kernel $(\Delta_\calS)_!(\bC_\calS)$ where $\bC_\calS$ is the constant sheaf on $\calS$.

\section{Matsuki duality}\label{Matsuki duality}
\subsection{}
We recall the definition of ``Matsuki datum''
introduced in \cite[Section 5]{MUV}.
\begin{definition}
Matsuki datum $(G,X,H^+,H^-)$ consists of 
a linear Lie group $G$ acting on a 
real analytic manifold $X$ and two subgroups $H$ and $H^-$ of 
$G$ such that there is a Bott-Morse function 
$f:X\to\mathbb R$ invariant under $H^+\cap H^-=U$ and 
a $U$-invariant metric on $X$, satisfying the following:
\begin{enumerate}
\item The gradient flow $\Phi$ of $f$ on $X$ preserves the 
$H^\pm$-orbits and $U$ has finitely many orbits in the fixed point $X^\Phi$ set
of the flow.
\item For any $U$-orbit $\mO$ in $X^\Phi$ denote 
\[\mO^\pm=\{x\in X| \Phi_t(x)\in\mO\text{\ as t goes to\ }\pm\infty\}.\]
Then $\mO^\pm$ are single $H^\pm$-orbits and the correspondence 
$\mO^+\leftrightarrow\mO^-$ is a bijection between $H^+$-and $H^-$-orbits on $X$.
\item The $H^+$-and $H^-$-orbits on $X$ are transversal.
\item The quotient $H^\pm/U$ is contractible.
\end{enumerate} 
\end{definition}
\begin{remark}
Note that if $(G,X,H^+,H^-)$ is a Matsuki datum then so is 
$(G,X,H^-,H^+)$.
\end{remark}
\begin{example}
Let $G$ be a connected complex reductive group 
and let $X$ be the flag manifold of $G$.
(1) Let $P=MN$ be a parabolic subgroup with 
unipotent radical $N$ and let $P^{-}=MN^-$ be its opposite.
Then $(G,X,MN,MN^-)$ is a Matsuki datum.
(2)
Let $G_\bbR$ be a real form of $G$ with a maximal compact subgroup
$K_\bbR\subset G_\bbR$ and let $K\subset G$ be its complexification.
According to \cite{MUV}, $(G,X,G_\bbR,K)$ (resp. $(G,X,K,G_\bbR)$)
is a Matsuki datum.
\end{example}
\subsection{}
Given a Matsuki datum $(G,X,H^+,H^-)$, one can consider the functors
\[\Upsilon_!^{H^+\to H^-}=\on{Av}_!^{H^-/U}\circ\on{oblv}_{H^+/U}:\on{Shv}(H^+\backslash X)_{}\ra \on{Shv}(H^-\backslash X)_{}\]
\[\Upsilon_*^{H^-\to H^+}=\on{Av}_*^{H^+/U}\circ\on{oblv}_{H^-/U}:\on{Shv}(H^-\backslash X)_{}\ra \on{Shv}(H^+\backslash X)_{}.\]
\begin{thm}\label{Matsuki}
The functor 
$\Upsilon_!^{H^+\ra H^-}$ 
is an equivalence with inverse given by  $\Upsilon^{H^-\ra H^+}_*$.
\end{thm}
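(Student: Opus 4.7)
\medskip

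\noindent\emph{Proof plan.} The strategy is to construct canonical natural transformations between the compositions and the identity functors via adjunction, and then verify they are isomorphisms by evaluating on a generating set of objects. First I would rewrite both Matsuki functors as push--pull along the correspondence
\[
H^+\backslash X \xleftarrow{\pi^+} U\backslash X \xrightarrow{\pi^-} H^-\backslash X,
\]
so that $\on{oblv}_{H^\pm/U}=(\pi^\pm)^*$, $\on{Av}_*^{H^\pm/U}=(\pi^\pm)_*$, and $\on{Av}_!^{H^\pm/U}=(\pi^\pm)_!\otimes\on{or}_{\pi^\pm}[d_{\pi^\pm}]$. Using the counits of the adjoint pairs $(\on{Av}_!^{H^\pm/U},\on{oblv}_{H^\pm/U})$ and $(\on{oblv}_{H^\pm/U},\on{Av}_*^{H^\pm/U})$, one obtains a canonical map $\Upsilon_!^{H^+\to H^-}\circ\Upsilon_*^{H^-\to H^+}\to\on{Id}$ and, dually via the units, a canonical map $\on{Id}\to\Upsilon_*^{H^-\to H^+}\circ\Upsilon_!^{H^+\to H^-}$.

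To check these natural transformations are isomorphisms, I would reduce to a generating set. By the $U$-invariant Morse stratification of $X$ given by axioms (1)--(2), the $H^\pm$-orbits on $X$ are indexed by $U$-orbits $\mO\subset X^\Phi$, and the dg-category $\on{Shv}(H^\pm\backslash X)^{\on{constr}}$ is generated under shifts and cones by the standard sheaves $(j^\pm_\mO)_!\calL$ for $H^\pm$-equivariant local systems $\calL$ on $\mO^\pm$. The core computation is to establish a canonical isomorphism
\[
\Upsilon_!^{H^+\to H^-}\bigl((j^+_\mO)_!\calL\bigr)\;\simeq\;(j^-_\mO)_!\calL'[\text{shift}]
\]
for the matched orbit $\mO^-$, where $\calL'$ is the local system corresponding to $\calL$ under Matsuki duality. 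The transversality axiom (3) makes the relevant fiber product $U\backslash X\times_{H^-\backslash X} U\backslash X$ tractable near $\mO=\mO^+\cap\mO^-$, allowing base change and the projection formula to reduce the computation to the single $U$-orbit $\mO$. The contractibility axiom (4), combined with the fact that $U$ is the common stabilizer data, then identifies $H^+$- and $H^-$-equivariant sheaves on the matched orbits via the chain of restrictions to $\mO$.

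The step I expect to be the main obstacle is the support/vanishing analysis: showing that $\Upsilon_!^{H^+\to H^-}\bigl((j^+_\mO)_!\bC\bigr)$ is supported on $\overline{\mO^-}$ and has no contribution from other $H^-$-orbits $\mO'^-$ with $\mO'\neq\mO$. The subtlety is that the closure orders on $\{\mO^+\}$ and $\{\mO^-\}$ are essentially opposite (reflecting that $\mO^+$ and $\mO^-$ are the stable/unstable manifolds of the flow $\Phi$), so the naive closure containment can fail. One resolves this by using the gradient flow $\Phi_t$ to construct, for each pair $(\mO^+,\mO'^-)$ with $\mO'\neq\mO$, an explicit deformation that exhibits the relevant fiber product as being built out of lower-dimensional cells transverse to both stratifications, forcing the potential cross-contributions to vanish in cohomology. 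Once this support analysis is in hand, combining it with the orbit-by-orbit identification above yields that both unit and counit constructed in Step~1 are isomorphisms, completing the proof.
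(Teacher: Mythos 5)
Your plan takes a genuinely different route from the paper's proof. The paper reduces to the same statement you do --- that the unit $\on{Id}\to\Upsilon_*^{H^-\to H^+}\circ\Upsilon_!^{H^+\to H^-}$ and counit $\Upsilon_!^{H^+\to H^-}\circ\Upsilon_*^{H^-\to H^+}\to\on{Id}$ are isomorphisms --- but then invokes the argument of \cite{CY} (or \cite[Proposition 1.4.2]{CGY}): both $\on{oblv}_{H^\pm/U}$ land in the full subcategory $\on{Shv}(U\backslash X)_{\on{sm}}$ of $U$-equivariant complexes whose cohomology sheaves are smooth along trajectories of the Morse flow $\Phi$, and on that subcategory the functors $\on{Av}_!^{H^+/U}$ and $\on{Av}_*^{H^-/U}$ act as the identity after hyperbolic restriction to the fixed-point set $X^\Phi$. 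Since hyperbolic restriction is conservative on $\on{Shv}(U\backslash X)_{\on{sm}}$, this gives the claim at once, with no orbit-by-orbit bookkeeping. Your proposal instead re-runs the standard-objects analysis of \cite[Theorem 5.3]{MUV}: generate by $!$-standards on $H^\pm$-orbits, and show $\Upsilon_!^{H^+\to H^-}$ carries $(j^+_\mO)_!\calL$ to $(j^-_\mO)_!\calL'$ up to shift. That is a legitimate alternative and is essentially MUV's original argument in the bounded setting; it has the virtue of being explicit about what the equivalence does to individual sheaves, at the cost of being far more delicate.

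The step you flag as ``the main obstacle'' is indeed the crux, and the sketch you give of how to handle it does not close the gap. You correctly note that the closure orders on $\{\mO^+\}$ and $\{\mO^-\}$ are reversed, so that for $\mO'\neq\mO$ one must show the compactly supported fiberwise cohomology over $\mO'^-$ of $\on{oblv}_{H^+/U}\bigl((j^+_\mO)_!\bC\bigr)$ vanishes; but ``an explicit deformation that exhibits the relevant fiber product as being built out of lower-dimensional cells transverse to both stratifications'' is not yet an argument --- it names the desired conclusion rather than deriving it. What MUV actually do at this point is prove a Morse-theoretic vanishing lemma using the flow $\Phi_t$ to contract the intersection of a stable manifold with an unmatched unstable manifold onto lower strata, then run an induction on the orbit stratification while tracking orientations and dimension shifts carefully. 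The hyperbolic-restriction argument of \cite{CY}/\cite{CGY} that the paper cites was designed precisely to sidestep this support analysis, which is why the paper's proof is only a few lines. If you want to keep your route, you need either to reproduce MUV's vanishing lemma in detail or to replace it with a genuinely worked-out flow-retraction argument; as written, the hardest step is asserted, not proved.
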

\begin{proof}
In the setting of bounded derived category this is the main result in  
\cite[Theorem 5.3]{MUV}.
In the setting of dg-category, it follows from  the alternative argument in  
\cite[Theorem 5.2]{CY} or \cite[Proposition 1.4.2]{CGY}.
Namely, we would like to show that the canonical adjunctions
$\on{Id}\to\Upsilon_*^{H^-\to H^+}\circ\Upsilon_!^{H^+\to H^-}$
and $\Upsilon_!^{H^+\to H^-}\circ\Upsilon_*^{H^-\to H}\to \on{Id}$
are isomorphisms.
Consider the category $\on{Shv}(U\backslash X)_{\on{sm}}\subset\on{Shv}(U\backslash X)$
consisting of $U$-equivaraint complexes whose cohomology sheaves are
smooth along the trajectories of the Morse flow $\Phi$.
We have $\on{oblv}_{H^\pm/U}(\mF)\in\on{Shv}(U\backslash X)_{\on{sm}}$ and as explained in \emph{loc. cit.} the 
desired claim follows from the fact that, on the category 
$\on{Shv}(U\backslash X)_{\on{sm}}$, 
 the functor $\on{Av}_!^{H^+/U}$ and 
$\on{Av}_*^{H^-/U}$ act as identity on hyperbolic restriction to
the fixed points set $X^\Phi$.\footnote{In \emph{loc. cit.}, we work in the setting 
of ``complex analytic'' Matuski datum where all the varieties are 
complex analytic, but the same argument works equally well in the 
semi-analytic setting.}
\end{proof}

\section{Singular support and transversal property}
\subsection{}
We recall some basic definition and properties of singular support of a complex of sheaves.
The reference is \cite{KSa}.
Let $S$ be a real analytic manifold. 
Given a closed conical semi-analytic subset $\Lambda\subset T^*S$ we denote 
by $\on{Shv}_{\Lambda}(S)^{\on{constr}}\subset\on{Shv}_{}(S)^{\on{constr}}$ the full dg-category of bounded constructible complexes 
$\mF$
with singular support $\on{sing}(\mF)\subset\Lambda$ and 
$\on{Shv}_{\Lambda}(S)\subset\on{Shv}_{}(S)^{}$ be the ind-completion of $\on{Shv}_{\Lambda}(S)^{\on{constr}}$.

Let $\calS$ be a smooth semi-analytic stack, and let $T^*\calS$ be the cotangent bundle.
Recall that for any smooth map $f:S\to\calS$ where $S$ is a real analytic manifold, we have a correspondence
\[T^*S\stackrel{df}\longleftarrow T^*\calS\times_\calS S\stackrel{pr}\longrightarrow T^*\calS.\]
For any conical closed semi-analytic sub-stack $\Lambda\subset T^*\calS$, 
and a smooth map $f:S\to\calS'$ as above 
we denote by
\[\Lambda_S:=df(pr^{-1}(\Lambda))\subset T^*S\]
and 
we define $\on{Shv}_{\Lambda}(\calS)^{\on{constr}}\subset\on{Shv}_{}(\calS)^{\on{constr}}$ to be the full dg-category of bounded constructible complexes 
$\mF$ such that 
\[\on{sing}(f^*\mF)\subset\Lambda_S.\]
We let $\on{Shv}_{\Lambda}(\calS)\subset\on{Shv}_{}(S)^{}$ be the ind-completion of $\on{Shv}_{\Lambda}(\calS)^{\on{constr}}$.

\begin{definition}
Let $f:\calS\to\calS'$ be a map between smooth semi-analytic stacks.
We say that a complex $\mF\in\on{Shv}(\calS)$ is \emph{transversal} with respect to $f$ if 
there is a closed conical semi-analytic subset $\Lambda\subset T^*\calS$ such that 
(1)
$\mF\in\on{Shv}_\Lambda(\calS)$ and (2) $\Lambda\times_{T^*\calS}(T^*\calS'\times_{\calS'}\calS)=\text{zero section }T^*_{\calS}\calS$ of $T^*\calS$.

\end{definition}

\begin{example}\label{example transversal}
Let $X$ be a compact real analytic manifold acted on by two Lie groups $H^+$ and $H^-$
and let $U\subset H^+\cap H^-$ be a Lie subgroup.
Consider the natural map $f^\pm:\calS=U\backslash X\to \calS'=H^\pm\backslash X$.
We claim that if $H^-$-and $H^+$-orbits on $X$ are transversal then, for any 
$\mF\in\on{Shv}(H^-\backslash X)$, its image
$\on{oblv}_{H^-/U}(\mF)\in\on{Shv}(U\backslash X)$ is transversal with respect to $f^+$.
Indeed, let $\Lambda^\pm\subset T^*(U\backslash X)$ be the image of 
$df^\pm:T^*(H^\pm\backslash X)\times_{H^\pm\backslash X}U\backslash X\to T^*(U\backslash X)$.
Then $\Lambda^\pm= T^*_{H^\pm}(X)/U\subset  T^*_{U}(X)/U= T^*(U\backslash X)$ 
(here $T^*_{U}(X)$, etc, stands for the conormal bundle of 
$U$-orbtis on $X$)
and we have (1) 
$\mF\in\on{Shv}_{\Lambda^-}(U\backslash X)$
and (2) $\Lambda^-\times_{T^*\calS}(T^*\calS'\times_{\calS'}\calS)=\Lambda^-\cap\Lambda^+=T^*_{H^-}(X)\cap T^*_{H^+}(X)/U$
which is the zero section $T^*_{U\backslash X}(U\backslash X)$ as 
$T^*_{H^-}(X)\cap T^*_{H^+}(X)=T^*_XX$.

\end{example}

Let $X$, $U$, $H=H^+$, and $f=f^+:U\backslash X\to H\backslash X$ be as in the Example \ref{example transversal}.
Let $\on{or}_{U\backslash X}$ be the orientation sheaf of $U\backslash X$.
We assume $H/U$ is contractible.
Then the map $f:U\backslash X\to H\backslash X$ is smooth with contractible fibers 
there exists a unique rank one local system 
$\ell_{H\backslash X}$ on $H\backslash X$ such that $f^*\ell_{H\backslash X}\is\on{or}_{U\backslash X}$.
Recall the pseudo-identity functor $\on{Ps-Id}_{H\backslash X}:\on{Shv}(H\backslash X)\ra \on{Shv}(H\backslash X)$.

Let $U_c$ be a maximal compact subgroup of $U$.
Note that the quotient $U/U_c$ is contractible.
The following lemma is an analogy of \cite[Theorem 3.2.6]{CGY} in the real analytic setting:
\begin{lemma}\label{lemma 1}
There is a canonical defined morphism of functors $\on{Shv}(U\backslash X)\ra \on{Shv}(H\backslash X)$:
\beq\label{arrow}
(-\otimes\ell_{H\backslash X})\circ\on{Av}_!^{H/U}\ra\on{Ps-Id}_{H\backslash X}\circ\on{Av}_*^{H/U}[d_X-d_{U_c}].
\eeq
The map (\ref{arrow}) is an isomorphism when evaluated on objects
which are transversal with respect the projection 
$U\backslash X\ra H\backslash X$.
\end{lemma}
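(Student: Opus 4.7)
I'll follow the strategy of the proof of \cite[Theorem 3.2.6]{CGY}, which is the complex algebraic analog. There are three main tasks: simplify the right-hand side via kernel manipulations, construct the map, and verify it is an isomorphism on transversal objects.

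\textbf{Step 1 (Simplifying the right-hand side).} Using the kernel description of $\on{Ps-Id}_{H\backslash X}$, I would apply base change along the Cartesian square
\[\xymatrix{U\backslash X \ar[d]_-f \ar[r]^-{\Delta'} & U\backslash X \times H\backslash X \ar[d]^-{\tilde f\,:=\,f\times\on{id}} \\ H\backslash X \ar[r]^-{\Delta_{H\backslash X}} & H\backslash X\times H\backslash X,}\]
where $\Delta'=(\on{id},f)$ is the graph of $f$. The relations $\tilde f^!(\Delta_{H\backslash X})_!\is\Delta'_! f^!$ together with $f^!\bC=\omega_f=\on{or}_f[d_f]$ and the $\otimes^!$-projection formula for $\tilde f_*$ allow one to rewrite the kernel of $\on{Ps-Id}_{H\backslash X}\circ\on{Av}_*^{H/U}$ into a manageable form involving $\Delta'_!\omega_f$ and $f_*$.

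\textbf{Step 2 (Construction of (\ref{arrow})).} The construction combines two ingredients:
\begin{enumerate}
\item Bookkeeping of orientation sheaves. Since $\on{or}_{U\backslash X}\is\on{or}_f\otimes f^*\on{or}_{H\backslash X}$ and $f^*\ell_{H\backslash X}\is\on{or}_{U\backslash X}$, one gets $f^*(\ell_{H\backslash X}\otimes\on{or}_{H\backslash X}^{-1})\is\on{or}_f$. Because $f$ is smooth with contractible fibers, the pullback $f^*$ is fully faithful on local systems, so the descent $\ell_{H\backslash X}^{\otimes 2}\is\bC_{H\backslash X}$ follows from $\on{or}_{U\backslash X}^{\otimes 2}\is\bC_{U\backslash X}$. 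Combined with the projection formula for $f$, these relations cancel all orientation factors between the two sides.
\item A canonical comparison map between compactly supported and ordinary pushforwards, shifted by $d_U-d_{U_c}$. To produce it, I would factor $f$ through the intermediate quotient $g:U_c\backslash X\to U\backslash X$ (where $U_c$ is the maximal compact). Since $U/U_c\is\bbR^{d_U-d_{U_c}}$ is contractible, $g^*$ is fully faithful, and one obtains the natural isomorphisms $g_*g^*\is\on{id}$ and $g_!g^!\is\on{id}$. Writing $\mF=g_*g^*\mF=g_!(g^*\mF\otimes\on{or}_g)[d_g]$ gives two presentations of $\mF$ for any $\mF\in\on{Shv}(U\backslash X)$, and post-composing with $f_!$ respectively $f_*$ and combining with the canonical natural transformation $h_!\to h_*$ for $h:=fg$ yields the desired canonical map $f_!\mF\to f_*\mF[d_U-d_{U_c}]$.
\end{enumerate}
Putting these ingredients together, the cumulative shift on the right-hand side of (\ref{arrow}) becomes $[d_X-d_{U_c}]$ as stated.

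\textbf{Step 3 (Isomorphism on transversal objects).} Assume now that $\mF$ is transversal with respect to $f$. Then by definition $\on{sing}(\mF)$ intersects the conormal to the $f$-fibers only in the zero section, i.e., $\mF$ is non-characteristic for $f$. By Kashiwara--Schapira's non-characteristic deformation theorem \cite{KSa}, this implies the comparison map between the compactly supported and ordinary pushforward along $f$ is an isomorphism after the shift by the fiber's non-compact dimension $d_U-d_{U_c}$. Thus (\ref{arrow}) becomes an isomorphism on such $\mF$.

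\textbf{Main obstacle.} The bulk of the technical effort lies in the careful bookkeeping of the various orientation sheaves ($\on{or}_f$, $\on{or}_{U\backslash X}$, $\on{or}_{H\backslash X}$, $\ell_{H\backslash X}$) and degree shifts so that they assemble precisely to the overall shift $[d_X-d_{U_c}]$; this requires a detailed application of the projection and base-change formulas for $\otimes^!$ in the stack setting. A related technical subtlety is that the diagonal $\Delta_{H\backslash X}$ of the stack $H\backslash X$ is only representable, not a closed immersion, so $\Delta_!$ is not canonically isomorphic to $\Delta_{*,\on{ren}}$ and the kernel manipulations must be carried out within the renormalized six-functor formalism of \cite[Appendix C]{AGKRRV1}.
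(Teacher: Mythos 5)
Your Steps 1–2 are roughly aligned with the paper's opening manipulations (base change to pull the kernel $(\Delta_{H\backslash X})_!(\bC)$ back to $\tilde\Delta_!(\on{or}_f)[d_f]$, and orientation bookkeeping), but Step 3 misidentifies the mechanism that makes the arrow an isomorphism, and this is a genuine gap. After the kernel manipulations, the two sides of~\eqref{arrow} are $(\tilde p_2)_!\bigl(\tilde\Delta_!(\on{or}_f)\otimes\tilde p_1^*(\mF)\otimes\tilde p_1^*(\on{or}_{U\backslash X})\bigr)[d_f]$ and $(\tilde p_2)_*\bigl(\tilde\Delta_!(\on{or}_f)\otimes^!\tilde p_1^!(\mF)\bigr)[d_f]$; the three discrepancies are (i) $(\tilde p_2)_!$ vs.\ $(\tilde p_2)_{*,\on{ren}}$, (ii) $\tilde p_1^*$ vs.\ $\tilde p_1^!$, and (iii) $\otimes$ vs.\ $\otimes^!$. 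Discrepancies (i) and (ii) are resolved by \emph{unconditional} isomorphisms: (ii) is the standard smooth pullback identity $\tilde p_1^!\is\tilde p_1^*\otimes\on{or}_{\tilde p_1}[d_{\tilde p_1}]$, and (i) is precisely the paper's Lemma~\ref{p_*=p_!}, $p_{*,\on{ren}}\is p_![-d_U+d_{U_c}]$, which is a compactness/stack-automorphism statement having nothing to do with transversality. Only discrepancy (iii) --- the comparison $\Delta_\calY^*(\mF_2\boxtimes\mF_1)\otimes\omega_{\Delta_\calY}\to\Delta_\calY^!(\mF_2\boxtimes\mF_1)$ along the diagonal of $\calY=U\backslash X\times H\backslash X$ --- is conditional, and it becomes an isomorphism when $\mF_2\boxtimes\mF_1$ is non-characteristic for $\Delta_\calY$ by \cite[Proposition 5.4.13]{KSa}. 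That non-characteristic condition is exactly what transversality of $\mF$ with respect to $f$ buys, since $\mF_2=\tilde\Delta_!(\on{or}_f)$ has singular support controlled by the $H$-conormals while $\mF_1=\tilde p_1^*\mF$ has singular support controlled by $\on{sing}(\mF)$.

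Your Step 3 instead attributes the conditional step to a comparison $f_!\to f_*$ and invokes the non-characteristic deformation theorem, but this has two problems. First, the shift you quote, $d_U-d_{U_c}$, is the stack correction of Lemma~\ref{p_*=p_!}, not the fiber dimension $d_f=d_H-d_U$ of $f$; a Poincar\'e-duality-type comparison $f_!\to f_*$ along the contractible fiber $H/U$ would produce a shift by $d_f$, which is not the shift appearing anywhere in the statement. Second, and more fundamentally, the non-characteristic deformation theorem does not produce an isomorphism $f_!\to f_*$; the relevant Kashiwara--Schapira input (Proposition 5.4.13) is about $g^*\otimes\omega_g\to g^!$ for a map $g$ that is non-characteristic for the sheaf, applied here to $g=\Delta_\calY$, not to $f$. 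In short, the transversality hypothesis is consumed by the $\otimes$ vs.\ $\otimes^!$ comparison on $\calY$, not by a $!$-vs.-$*$ pushforward comparison along $f$; your proposal leaves this $\otimes$ vs.\ $\otimes^!$ step unaddressed.
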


\begin{proof}
We follow the argument in \emph{loc.cit.} closely. 
All the direct image functors below should be understood as the 
\emph{renormalized} version in Section \ref{ren} and,
for simplicity, we still use the notation $(p_1)_*=(p_1)_{*,\on{ren}}$, etc.
Consider the following diagram
\[\xymatrix{U\backslash X\ar[r]^{\tilde\Delta\ \ \ \ }\ar[d]^f&U\backslash X\times H\backslash X\ar[d]^{f\times\id}\ar[r]^{\ \ \ \ \tilde p_1}&U\backslash X\ar[d]^f\\
H\backslash X\ar[r]^{\Delta\ \ \ \ }&H\backslash X\times H\backslash X\ar[d]^{p_2}\ar[r]^{\ \ \ \ p_1}&H\backslash X\\
&H\backslash X&}\]
Let $\mF\in\on{Shv}_U(X)$. 
We have 
\[(-\otimes\ell_{H\backslash X})\circ\on{Av}_!^{H/U}(\mF)\is \ell_{H\backslash X}\otimes f_!(\mF\otimes\on{or}_f)[d_f]\is f_!(\mF\otimes\on{or}_f\otimes f^*(\ell_{H\backslash X}))[d_f]\is\]
\[\is 
f_!(\mF\otimes\on{or}_f\otimes\on{or}_{U\backslash X})[d_f]\is
(\tilde p_2)_!\circ\tilde\Delta_!(\mF\otimes\on{or}_f\otimes\on{or}_{U\backslash X}))[d_f]\is\]
\[\is(\tilde p_2)_!\circ\tilde\Delta_!\circ\tilde\Delta^*\circ\tilde p_1^*(\mF\otimes\on{or}_f\otimes\on{or}_{U\backslash X}))[d_f]\is\]
\[\is(\tilde p_2)_!(\tilde\Delta_!(\bC_{U\backslash X})\otimes\tilde p_1^*(\mF\otimes\on{or}_f\otimes\on{or}_{U\backslash X}))[d_f]\is\]
\[\is(\tilde p_2)_!(\tilde\Delta_!(\on{or}_f)\otimes\tilde p_1^*(\mF\otimes\on{or}_{U\backslash X}))[d_f].\]
Here $\tilde p_2=p_2\circ (f\times\id)$. 
Note also that 
\[\on{Ps-Id}_{H\backslash X}\circ\on{Av}_*^{H/U}(\mF)\is(p_2)_{*}(\Delta_!(\bC_{H\backslash X})\otimes^!(p_1^!\circ f_*(\mF)))\is(p_2)_{*}(\Delta_!(\bC_{H\backslash X})\otimes^!(f\times\id)_*\circ\tilde p_1^!(\mF)))\is\]
\[\stackrel{}\is(\tilde p_2)_*((f\times\id)^!\circ\Delta_!(\bC_{H\backslash X})\otimes^! (\tilde p_1)^!\mF)\is(\tilde p_2)_*(\tilde\Delta_!\circ(f^!(\bC_{H\backslash X})\otimes^! \tilde p_1^!(\mF))\is\]
\[\is(\tilde p_2)_*(\tilde\Delta_!(\on{or}_f)\otimes^! \tilde p_1^!(\mF))[d_f]\]
Since $\tilde p_1$ is smooth and $X$ is compact, it follows from~\eqref{smooth pull back} and Lemma \ref{p_*=p_!}
below that
\[\tilde p_1^!(\mF)\is \tilde p_1^*(\mF)\otimes\on{or}_{\tilde p_1}[d_{\tilde p_1}]\ \ \ \ \ \ (p_2)_*\is (p_2)_![-d_{U}+d_{U_c}].\]
All together we obtain
\[(-\otimes\ell_{H\backslash X})\circ\on{Av}_!^{H/U}(\mF)\is(\tilde p_2)_!(\tilde\Delta_!(\on{or}_f)\otimes\tilde p_1^*(\mF)\otimes\tilde p_1^*(\on{or}_{U\backslash X})))[d_f]\]
\[\on{Ps-Id}_{H\backslash X}\circ\on{Av}_*^{H/U}(\mF)\is(\tilde p_2)_!(\tilde\Delta_!(\on{or}_f)\otimes^! (\tilde p_1^*(\mF)\otimes\on{or}_{\tilde p_1}))[d_f+d_{\tilde p_1}-d_{U}+d_{U_c}].\]
Write $\calY=U\backslash X\times H\backslash X$,
$\mF_1=\tilde p_1^*(\mF)$, $\mF_2=\tilde\Delta_!(\on{or}_f)$, 
and let $\Delta_\calY:\calY\to\calY\times\calY$ be the diagonal imbedding.
Note that there is a canonical arrow
\beq\label{can arrow}
\mF_2\otimes\mF_1\otimes\Delta_\calY^!(\bC_{\calY\times\calY})\is
\Delta^*(\mF_2\boxtimes\mF_1)\otimes\Delta_\calY^!(\bC_{\calY\times\calY})\to
\Delta^!(\mF_2\boxtimes\mF_1)\is\mF_2\otimes^!\mF_1
\eeq
Since 
\[\Delta_\calY^!(\bC_{\calY\times\calY})\is\on{or}_\calY[-d_\calY]\ \ \ \ \ \ 
\on{or}_{\calY}\is\on{or}_{\tilde p_1}\otimes \tilde p_1^*(\on{or}_{U\backslash X}) \]
\[d_X-d_{U_c}=d_{\calY}-(d_{\tilde p_1}-d_{U}+d_{U_c}),\]
it follows that there is a canonical arrow
\[\mF_2\otimes\mF_1\otimes\tilde p_1^*(\on{or}_{U\backslash X})\is
\mF_2\otimes\mF_1\otimes\Delta_\calY^!(\bC_{\calY\times\calY})\otimes\on{or}_{\tilde p_1}[d_\calY]\to
\mF_2\otimes^!\mF_1\otimes\on{or}_{\tilde p_1}[d_\calY]\]
and the desired arrow in~\eqref{arrow} is given by
\[(-\otimes\ell_{H\backslash X})\circ\on{Av}_!^{H/U}(\mF)\is (\tilde p_2)_!(\mF_2\otimes\mF_1\otimes\tilde p_1^*(\on{or}_{U\backslash X}))[d_f]\stackrel{}\longrightarrow
(\tilde p_2)_!(\mF_2\otimes^!\mF_1\otimes\on{or}_{\tilde p_1})[d_f+d_\calY]\is\]
\[\is
\on{Ps-Id}_{H\backslash X}\circ\on{Av}_*^{H/U}(\mF)[d_X-d_{U_c}].\]

Assume $\mF$ is transversal with respect to $f:U\backslash X\to H\backslash X$.
Then it is straightforward to check that 
$\mF_2\boxtimes\mF_1\in\on{Shv}_{}(\calY\times\calY)$ is \emph{non-characteristic} with respect to
$\Delta_\calY:\calY\to\calY\times\calY$, that is, the restriction map  
$\on{sing}(\mF_2\boxtimes\mF_1)|_\calY \subset T^*(\calY\times\calY)|_\calY\to T^*\calY$
of the differential map of $\Delta_\calY$ to the singular support $\on{sing}(\mF_2\boxtimes\mF_1)$
is an injective map. It follows from \cite[Proposition 5.4.13]{KSa} that 
the canonical arrow in~\eqref{can arrow}, and hence the one in~\eqref{arrow}, is an isomorphism.
The proof is completed.

\end{proof}

\begin{lemma}\label{p_*=p_!}
Let $X$ be a compact real analytic manifold acted on by $U$.
Let $p:U\backslash X\to\on{pt}$ be the projection map.
We have $p_{*,\on{ren}}\is p_![-d_{U}+d_{U_c}]$.
\end{lemma}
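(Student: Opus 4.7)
The plan is to use the Iwasawa decomposition to reduce the statement to an identity on the proper stack $U_c\backslash X$. By Iwasawa, the homogeneous space $U/U_c$ is diffeomorphic to $\bbR^{d_U-d_{U_c}}$, hence contractible and orientable. I will consider the smooth morphism $q: U_c\backslash X \ra U\backslash X$ induced by the inclusion $U_c \hookrightarrow U$, whose fibers are isomorphic to $U/U_c$. Orientability of the fibers gives $q^! \is q^*[d_U-d_{U_c}]$ (the relative orientation sheaf is trivial), and contractibility, via the homotopy invariance of $\bC$-constructible sheaves under smooth morphisms with contractible fibers, implies that $q^*$ is an equivalence $\on{Shv}(U\backslash X) \xrightarrow{\is} \on{Shv}(U_c\backslash X)$ with quasi-inverse $q_*$. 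Moreover, since $q$ is schematic, $q_{*,\on{ren}}=q_*$.

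Setting $p':=p\circ q: U_c\backslash X \ra \on{pt}$, the identities $p'_{*,\on{ren}} = p_{*,\on{ren}}\circ q_*$ and $p'_! = p_!\circ q_!$, combined with $q_* \is (q^*)^{-1}$ and $q_! \is (q^!)^{-1}$, translate the desired identity $p_{*,\on{ren}}\is p_![-d_U+d_{U_c}]$ into
\[
p'_{*,\on{ren}} \is p'_! \qquad \text{on } \on{Shv}(U_c\backslash X).
\]

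To verify this last identity, I note that both $p'_{*,\on{ren}}$ and $p'_!$ are continuous functors $\on{Shv}(U_c\backslash X)\ra \on{Shv}(\on{pt})$, so it suffices to check agreement on a set of compact generators. I will take compact generators of the form $i_!\bC_Y$ with $i:Y \ra U_c\backslash X$ a proper morphism from a compact real analytic manifold $Y$. Then $i_! = i_*$ by properness of $i$, and $p'\circ i: Y\ra \on{pt}$ is itself proper since $Y$ is compact, so
\[
p'_*(i_!\bC_Y) = (p'\circ i)_*(\bC_Y) \is (p'\circ i)_!(\bC_Y) = p'_!(i_!\bC_Y),
\]
and continuity extends the agreement to all of $\on{Shv}(U_c\backslash X)$.

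The hard part will be justifying the equivalence $q^*$ in the first step — namely, the homotopy invariance of $\bC$-constructible sheaves under smooth morphisms with contractible fibers in the semi-analytic setting — and exhibiting compact generators of $\on{Shv}(U_c\backslash X)$ of the stated form. Both should follow from the foundational framework of \cite{AGKRRV1}, but require some care.
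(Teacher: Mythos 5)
Your proposal is correct in outline but takes a genuinely different route from the paper's. The paper factors $p$ through the proper map $\pi:U\backslash X\to U\backslash\on{pt}=BU$ (compactness of $X$ makes $\pi$ proper, so $\pi_*=\pi_!$ is continuous and the problem reduces to $X=\on{pt}$), and then verifies the identity directly on the compact generator $\sigma_*\bC_{\on{pt}}$ of $\on{Shv}(BU)$, where $\sigma:\on{pt}\to BU$, using $\sigma_*\bC_{\on{pt}}\cong\sigma_!\bC_{\on{pt}}$ up to a shift by $d_U-d_{U_c}$. You shrink the group instead of quotienting out the space: pass to $U_c\backslash X$ via $q$, where both $U_c$ and $X$ are compact so $p'_{*,\on{ren}}=p'_*=p'_!$ by properness, and absorb the dimension shift into $q^!\cong q^*[d_U-d_{U_c}]$. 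Both routes are driven by the Iwasawa/Cartan decomposition $U\cong U_c\times\bbR^{d_U-d_{U_c}}$; the paper concentrates the non-properness in the single atlas map $\sigma:\on{pt}\to BU$ and does one explicit fiber computation there, whereas you trade it for the Bernstein--Lunts equivalence $q^*:\on{Shv}(U\backslash X)\cong\on{Shv}(U_c\backslash X)$ up front, after which the endgame is entirely formal.

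Two of the points you flag as ``the hard part'' deserve a more precise formulation. The equivalence $q^*$ is \emph{not} an instance of homotopy invariance of constructible sheaves along smooth maps with contractible fibers --- for $\bbR\to\on{pt}$ that pullback is fully faithful but far from essentially surjective. What you actually need is the Bernstein--Lunts theorem that, for a closed subgroup $U_c\hookrightarrow U$ with $U/U_c$ contractible, the forgetful functor on equivariant derived categories is an equivalence; this does hold in the constructible semi-analytic setting of \cite{AGKRRV1}, but it is a statement proved via the simplicial presentation $\on{Shv}(H\backslash X)=\lim_n\on{Shv}(H^n\times X)$, not about the single smooth morphism $q$. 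Second, $q^!\cong q^*[d_U-d_{U_c}]$ requires the relative orientation local system $\on{or}_q$ on $U_c\backslash X$ (monodromy of $H^{d_U-d_{U_c}}_c(U/U_c)$) to be trivial; this is automatic when $U$ is connected, but for disconnected $U$ it is exactly the kind of orientation twist the paper tracks elsewhere via $\ell_{H\backslash X}$. The paper's own proof of this lemma has the same implicit assumption, hidden in $\sigma_*\bC_{\on{pt}}\cong\sigma_!\bC_{\on{pt}}[\cdot]$ on $\on{Shv}(BU)$, so this is not a defect peculiar to your route --- but you should state it. Finally, your compact generators of $\on{Shv}(U_c\backslash X)$ are more naturally taken of the form $(a\circ\iota)_!\bC_Z$ with $a:X\to U_c\backslash X$ the (proper) atlas and $\iota:Z\hookrightarrow X$ a closed subanalytic subset (compact but not necessarily a manifold); alternatively you can sidestep generators entirely by noting that $p'$ itself is proper, so $p'_!=p'_*$ is continuous and therefore already equals $p'_{*,\on{ren}}$.
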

\begin{proof}
Since the projection factors as 
$p:U\backslash X\to U\backslash\on{pt}\to\on{pt}$, where the first map is proper, we can assume $X=\on{pt}$. 
Let $\sigma:\on{pt}\to U\backslash\on{pt}$ be the projection.
We have $\sigma_{*,\on{ren}}\bC_{\on{pt}}\is\sigma_*\bC_{\on{pt}}$ (as $\sigma$ is schematic) and
$\sigma_*\bC_{\on{pt}}\is\sigma_!\bC_{\on{pt}}[-d_U+d_{U_c}]$,
and it follows that 
\[p_![-d_{L}+d_{L_c}](\sigma_*\bC_{\on{pt}})\is p_![-d_{U}+d_{U_c}]\circ\sigma_![-d_{U}+d_{U_c}](\bC_{\on{pt}})\is\bC_{\on{pt}}\]
\[p_{*,\on{ren}}(\sigma_*\bC_{\on{pt}})\is p_{*,\on{ren}}\circ\sigma_{*,\on{ren}}(\bC_{\on{pt}})\is\bC_{\on{pt}}.\]
Since the category $\on{Shv}(U\backslash\on{pt})$ is compactly generated by
$\sigma_*\bC_{\on{pt}}$, we conclude that $p_![-d_{U}+d_{U_c}]\is p_{*,\on{ren}}$.

\end{proof}
\begin{remark}\label{twisting}
To see that ``twisting'' $(-)\otimes\ell_{H\backslash X}$ 
in~\eqref{arrow} is necessary 
(in contrast to the complex analytic setting), let us consider the case 
when $H=U$ is trivial and $X$ is a compact non-oriented real analytic manifold.
Then $\on{Av}^{H/U}_*=\on{Av}^{H/U}_!=\on{Id}$, 
$\ell_{H\backslash X}=\on{or}_X$ and 
and the arrow $(-)\otimes\on{or}_X\to\on{Ps-Id}_X[d_X]$ 
in~\eqref{arrow}
(which is an isomorphism) comes from the map between the corresponding kernels
\[(\Delta_X)_*(\omega_X\otimes\on{or}_X)\is
(\Delta_X)_*(\bC_X)[d_X]\is(\Delta_X)_!(\bC_X)[d_X].\]
Here $\Delta_X:X\to X\times X$ is the diagonal closed embedding.
\end{remark}

\quash{
\begin{lemma}\label{lemma 2}
Any object in the essential image of the forgetful functor 
\[\on{oblv}_{H^-/U}:\on{Shv}(H^-\backslash X)\ra \on{Shv}(U\backslash X)\] is transversal with respect to $U\backslash X\ra H^+\backslash X$. The same is trure if we replace $H^+$ by $H^-$.
\end{lemma}
\begin{proof}
This follows from the fact that 
$H^+$ and $H^-$-orbtis on $X$ are transversal.
\end{proof}}

\section{Main results}
\subsection{}
We now fix a Matsuki datum $(G,X,H^+,H^-)$ as in the Section \ref{Matsuki duality}.
Let $U_c$ be the maximal compact subgroup of 
$U=H^+\cap H^-$.
The following theorems are the main results of the paper.
\begin{thm}\label{main 1}
There is a canonical isomorphism of functors $\on{Shv}(H^+\backslash X)\ra \on{Shv}(H^+\backslash X)$:
\[\on{Ps-Id}_{H^+\backslash X}\is(-\otimes\ell_{H^+\backslash X})\circ\Upsilon_!^{H^-\ra H^+}\circ\Upsilon^{H^+\ra H^-}_![-d_X+d_{U_c}].\]
\end{thm}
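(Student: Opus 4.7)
The plan is to assemble the theorem from three pieces already in place: Lemma \ref{lemma 1}, the transversality assertion of Example \ref{example transversal}, and the Matsuki duality Theorem \ref{Matsuki}. The strategy is to apply Lemma \ref{lemma 1} with $H = H^+$ to the image of $\on{oblv}_{H^-/U}$, convert the resulting map into an isomorphism of functors between $\on{Shv}(H^-\backslash X)$ and $\on{Shv}(H^+\backslash X)$, and then precompose with $\Upsilon_!^{H^+\to H^-}$ to translate this into an identity on $\on{Shv}(H^+\backslash X)$ via Matsuki's equivalence.

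Concretely, first I would invoke Lemma \ref{lemma 1} in the case $H=H^+$ to obtain a canonical natural transformation
\[
(-\otimes\ell_{H^+\backslash X})\circ\on{Av}_!^{H^+/U}\longrightarrow \on{Ps-Id}_{H^+\backslash X}\circ\on{Av}_*^{H^+/U}[d_X-d_{U_c}]
\]
of functors $\on{Shv}(U\backslash X)\to\on{Shv}(H^+\backslash X)$. Next I would precompose with $\on{oblv}_{H^-/U}:\on{Shv}(H^-\backslash X)\to\on{Shv}(U\backslash X)$; by definition of the Matsuki functors this yields a natural transformation
\[
(-\otimes\ell_{H^+\backslash X})\circ\Upsilon_!^{H^-\to H^+}\longrightarrow \on{Ps-Id}_{H^+\backslash X}\circ\Upsilon_*^{H^-\to H^+}[d_X-d_{U_c}].
\]
Example \ref{example transversal} (applied with the roles of $H^+$ and $H^-$ as written, using that $H^+$- and $H^-$-orbits on $X$ are transversal by axiom (3) of a Matsuki datum) shows that every object in the image of $\on{oblv}_{H^-/U}$ is transversal with respect to $U\backslash X\to H^+\backslash X$, hence the second clause of Lemma \ref{lemma 1} upgrades this natural transformation to an isomorphism.

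Finally I would precompose with $\Upsilon_!^{H^+\to H^-}:\on{Shv}(H^+\backslash X)\to\on{Shv}(H^-\backslash X)$ and invoke Theorem \ref{Matsuki} to collapse $\Upsilon_*^{H^-\to H^+}\circ\Upsilon_!^{H^+\to H^-}\is\on{Id}$ on the right-hand side, yielding
\[
(-\otimes\ell_{H^+\backslash X})\circ\Upsilon_!^{H^-\to H^+}\circ\Upsilon_!^{H^+\to H^-}\is \on{Ps-Id}_{H^+\backslash X}[d_X-d_{U_c}],
\]
which is the desired formula after a cohomological shift.

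I do not expect a serious obstacle: all the analytic content (singular support computations, orientation sheaf bookkeeping, noncharacteristic base change) has been absorbed into Lemma \ref{lemma 1}, and the categorical content is absorbed into Theorem \ref{Matsuki}. The only point that deserves a line of care is checking that the transversality hypothesis indeed applies to $\on{oblv}_{H^-/U}(\mF)$ on the nose (not merely up to singular support estimates depending on $\mF$), but this is exactly the content of Example \ref{example transversal}, whose conclusion $\Lambda^-\cap\Lambda^+ = T^*_{U\backslash X}(U\backslash X)$ uses only the transversality of $H^+$- and $H^-$-orbits guaranteed by axiom (3).
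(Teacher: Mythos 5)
Your proof is correct and proceeds along essentially the same lines as the paper's own argument: both rest on the key isomorphism
\[
(-\otimes\ell_{H^+\backslash X})\circ\Upsilon_!^{H^-\to H^+}\;\is\;\on{Ps-Id}_{H^+\backslash X}\circ\Upsilon_*^{H^-\to H^+}[d_X-d_{U_c}],
\]
obtained from Lemma \ref{lemma 1} (with $H=H^+$) plus the transversality of Example \ref{example transversal}, and then use Theorem \ref{Matsuki} to transport this identity to an equation of endofunctors on $\on{Shv}(H^+\backslash X)$. The only cosmetic difference is directional: the paper first uses Matsuki duality to reduce the theorem to the displayed intermediate isomorphism and then invokes Lemma \ref{lemma 1}, whereas you prove the intermediate isomorphism first and then precompose with $\Upsilon_!^{H^+\to H^-}$ (using that $\Upsilon_*^{H^-\to H^+}\circ\Upsilon_!^{H^+\to H^-}\is\on{Id}$) to obtain the theorem; these are logically equivalent.
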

\begin{proof}
By Theorem \ref{Matsuki}, $\Upsilon_!^{H^-\ra H^+}
=\on{Av}_!^{H^+/U}\circ\on{oblv}_{H^-/U}$ 
is an equivalence with inverse given by 
\[\Upsilon^{H^+\to H^-}_*=\on{Av}_*^{H^-/U}\circ\on{oblv}_{H^+/U}.\]
Thus it suffices to show that there is an isomorphism 
\[
\on{Av}_!^{H^+/U}\circ\on{oblv}_{H^-/U}\is
(-\otimes\ell_{H^+\backslash X})\circ\on{Ps-Id}_{H^+\backslash X}\circ
\on{Av}_*^{H^+/U}\circ\on{oblv}_{H^-/U}[d_X-d_{U_c}].\]
The later follows from Lemma \ref{lemma 1} and Example \ref{example transversal}.

\end{proof}

Let $\on{Se}_{H^+\backslash X}:\on{Shv}(H^+\backslash X)\to\on{Shv}(H^+\backslash X)$
be the Serre functor for $\on{Shv}(H^+\backslash X)$.
\begin{thm}\label{main 2}
There is a canonical isomorphism of functors $\on{Shv}(H^+\backslash X)\ra \on{Shv}(H^+\backslash X)$:
\[\on{Se}_{H^+\backslash X}\is\Upsilon_*^{H^-\ra H^+}\circ\Upsilon^{H^+\ra H^-}_*\circ(-\otimes\ell_{H^+\backslash X})[d_X-d_{U_c}].\]
\end{thm}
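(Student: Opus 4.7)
The plan is to deduce Theorem \ref{main 2} from Theorem \ref{main 1} as an essentially formal consequence, by identifying the Serre functor with the inverse of the Drinfeld--Gaitsgory functor and then inverting the formula for $\on{Ps-Id}_{H^+\backslash X}$ obtained in Theorem \ref{main 1}.

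First, I would invoke \cite[Theorem 2.1.5]{GY}, which asserts that whenever the Serre functor exists, the Drinfeld--Gaitsgory functor is isomorphic to its inverse. Hence $\on{Se}_{H^+\backslash X}\is\on{Ps-Id}_{H^+\backslash X}^{-1}$, and the task reduces to inverting the formula
\[\on{Ps-Id}_{H^+\backslash X}\is(-\otimes\ell_{H^+\backslash X})\circ\Upsilon_!^{H^-\ra H^+}\circ\Upsilon^{H^+\ra H^-}_![-d_X+d_{U_c}].\]

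Next, I would verify that each of the four factors on the right-hand side is an auto-equivalence. The shift is trivially invertible. By Theorem \ref{Matsuki}, $\Upsilon_!^{H^+\ra H^-}$ is an equivalence with inverse $\Upsilon^{H^-\ra H^+}_*$. Applying Theorem \ref{Matsuki} to the swapped Matsuki datum $(G,X,H^-,H^+)$, which is again a Matsuki datum by the remark in Section \ref{Matsuki duality}, gives that $\Upsilon_!^{H^-\ra H^+}$ is an equivalence with inverse $\Upsilon^{H^+\ra H^-}_*$. Finally, $(-\otimes\ell_{H^+\backslash X})$ is an involution since $\ell_{H^+\backslash X}^{\otimes 2}\is\bC_{H^+\backslash X}$, a consequence of $\on{or}_{U\backslash X}^{\otimes 2}\is\bC_{U\backslash X}$ together with the defining property $f^*\ell_{H^+\backslash X}\is\on{or}_{U\backslash X}$ of $\ell_{H^+\backslash X}$.

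The last step is to invert the composition factor-by-factor in reverse order, which gives
\begin{align*}
\on{Se}_{H^+\backslash X}
&\is [d_X-d_{U_c}]\circ (\Upsilon^{H^+\ra H^-}_!)^{-1}\circ (\Upsilon_!^{H^-\ra H^+})^{-1}\circ (-\otimes\ell_{H^+\backslash X})^{-1}\\
&\is \Upsilon_*^{H^-\ra H^+}\circ\Upsilon^{H^+\ra H^-}_*\circ (-\otimes\ell_{H^+\backslash X})[d_X-d_{U_c}],
\end{align*}
matching the claimed formula. I do not anticipate a substantive obstacle here: the argument is purely formal once Theorem \ref{main 1} and Matsuki duality are in place. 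The only bookkeeping that demands care is the order of composition and the signs of the shift when inverting, both of which are routine.
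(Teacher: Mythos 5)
Your proof is correct and takes essentially the same route as the paper: identify $\on{Se}_{H^+\backslash X}$ with $\on{Ps-Id}_{H^+\backslash X}^{-1}$, then invert the formula of Theorem~\ref{main 1} factor-by-factor using Theorem~\ref{Matsuki} (applied to both $(G,X,H^+,H^-)$ and the swapped datum) and $\ell_{H^+\backslash X}^{\otimes 2}\is\bC_{H^+\backslash X}$. The only minor divergence is the reference for $\on{Se}\is\on{Ps-Id}^{-1}$: the paper invokes \cite[Example 3.3.8 and Corollary 3.4.7]{AGKRRV2}, which applies because $H^+\backslash X$ has finitely many isomorphism classes of points, whereas you cite \cite[Theorem 2.1.5]{GY} (stated there in the $(\fg,K)$-module setting); either reference suffices for the present case, but the AGKRRV2 version is the one that directly covers the general Matsuki datum.
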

\begin{proof}
Since the stack $H^+\backslash X$ has only finitely many isomorphism 
class of points,
according to \cite[Example 3.3.8 and Corollary 3.4.7]{AGKRRV2}, the Serre functor $\on{Se}_{H^+\backslash X}$ and the Drinfeld-Gaitsgory functor $\on{Ps-Id}_{H^+\backslash X}$ are inverse to each other.
Now the theorem follows immediately from Theorem \ref{main 1} and Theorem \ref{Matsuki},
and the fact $\ell_{H^+\backslash X}\otimes\ell_{H^+\backslash X}\is\bC_{H^+\backslash X}$.
\end{proof}
\begin{remark}\label{modification}
In the case when both $X$ and $U$ and the $U$-action on $X$ are complex analytic 
or 
$X$ is oriented and 
$U$ is connected, the orientation sheaf $\on{or}_{U\backslash X}\is\bC_{U\backslash X}$ is trivial. Indeed, the first case is a well-known fact 
and the second case follows from the fact that 
$\on{or}_\sigma\is\bC_X$ where $\sigma:X\to U\backslash X$
(see, e.g., \cite[Lemma 7.5.3]{BL}), and hence 
$\sigma^*\on{or}_{U\backslash X}\is\on{or}_\sigma\otimes\on{or}_X\is\on{or}_X\is\bC_X$ (as $X$ is oriented). Since $\sigma$ is smooth with connected fiber it implies 
$\on{or}_{U\backslash X}\is\bC_{U\backslash X}$.
Thus in the above situation, the
functor $(-)\otimes\ell_{H^+}$ is isomorphic to the identity functor
$(-)\otimes\ell_{H^+\backslash X}\is\on{Id}_{H^+\backslash X}$
and the formulas above becomes just
\[\on{Ps-Id}_{H^+\backslash X}\is\Upsilon_!^{H^-\ra H^+}\circ\Upsilon^{H^+\ra H^-}_![-d_X+d_{U_c}]\]
\[\on{Se}_{H^+\backslash X}\is\Upsilon_*^{H^-\ra H^+}\circ\Upsilon^{H^+\ra H^-}_*[d_X-d_{U_c}].\]

\end{remark}

\subsection{The case of twisted sheaves}\label{twisted}
We extend Theorem \ref{main 1} and Theorem \ref{main 2} to the setting of twisted sheaves.
For simplicity, we consider the situation when $X$ is the flag manifold of
a complex reductive group $G$.
Write $\calS= H^\pm\backslash X$.
Then for any character $\lambda$ of the Lie algebra of the universal Cartan group of $G$, we have the dg-category 
$\on{Shv}(\calS)^{\on{constr}}_\lambda$ 
of 
$\lambda$-twisted bounded $\bC$-constructible sheaves on 
$\calS$ (see, e.g., \cite[Section 6]{MUV})
and its ind-completion $\on{Shv}(\calS)_\lambda$.

We have similar six functor formalism in the twisted setting 
and 
any object 
$\mQ\in\on{Shv}(\calS\times\calS)_{-\lambda,\lambda}$ defines a functor
\[F_\mQ:\on{Shv}(\calS)\to\on{Shv}(\calS),\ \ \mF\to (p_2)_{*,\on{ren}}(\mQ\otimes^!(p_1)^!(\mF)).\] 
Consider the diagonal embedding 
$\Delta_\calS:\calS\to\calS\times\calS$.
The Drinfeld-Gaitsgory functor for $\on{Shv}(\calS)_\lambda$ is 
given by 
\[\on{Ps-Id}_{\calS}:=F_\mQ:\on{Shv}(\calS)_\lambda\to\on{Shv}(\calS)_\lambda\] with 
\[\mQ=(\Delta_{\calS})_*(\bC_\calS)\in\on{Shv}(\calS\times\calS)_{-\lambda,\lambda}\]
where we note that the pullback of the $(-\lambda,\lambda)$-twisting 
along the diagonal map is canonically trivial and hence the object above 
is well-defined.
Let $\omega_{U\backslash X}\in\on{Shv}(U\backslash X)_\lambda$ be the 
dualizing complex and 
let $\ell_{H^+\backslash X}$ to be the unique rank one $\lambda$-twisted local system on 
 $H^+\backslash X$ such that 
its pullback to $U\backslash X$ is isomorphic to 
$\on{or}_{U\backslash X}=\calH^{-d_{U\backslash X}}(\omega_{U\backslash X})$. 

Now all the discussion in the previous section work in the 
twisted setting and we have 
\begin{thm}\label{main 1 twisted}
There is a canonical isomorphism of functors $\on{Shv}(H^+\backslash X)_\lambda\ra \on{Shv}(H^+\backslash X)_\lambda$:
\[\on{Ps-Id}_{H^+\backslash X}\is(-\otimes\ell_{H^+\backslash X})\circ\Upsilon_!^{H^-\ra H^+}\circ\Upsilon^{H^+\ra H^-}_![-d_X+d_{U_c}]\]
\end{thm}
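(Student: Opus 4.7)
The plan is to repeat the proof of Theorem \ref{main 1} verbatim in the $\lambda$-twisted setting, after promoting its three ingredients---the Matsuki equivalence (Theorem \ref{Matsuki}), the natural transformation of Lemma \ref{lemma 1}, and the transversality criterion of Example \ref{example transversal}---to twisted sheaves. Concretely, via the twisted Matsuki equivalence $\Upsilon_!^{H^+\to H^-}:\on{Shv}(H^+\backslash X)_\lambda\is\on{Shv}(H^-\backslash X)_\lambda$, it will suffice to produce an isomorphism
\[\on{Av}_!^{H^+/U}\circ\on{oblv}_{H^-/U}\is(-\otimes\ell_{H^+\backslash X})\circ\on{Ps-Id}_{H^+\backslash X}\circ\on{Av}_*^{H^+/U}\circ\on{oblv}_{H^-/U}[d_X-d_{U_c}],\]
which should follow from the twisted Lemma \ref{lemma 1} applied to any object in the essential image of $\on{oblv}_{H^-/U}:\on{Shv}(H^-\backslash X)_\lambda\to\on{Shv}(U\backslash X)_\lambda$, since such objects remain transversal with respect to $f^+:U\backslash X\to H^+\backslash X$.

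First I would verify that the Matsuki equivalence extends to $\lambda$-twisted complexes. The hyperbolic restriction/contraction argument cited in the proof of Theorem \ref{Matsuki} is local along trajectories of the Morse flow $\Phi$, and the $\lambda$-twisting restricts to a fixed rank one local system on each trajectory, so the argument that $\on{Av}^{H^\pm/U}_!$ and $\on{Av}^{H^\pm/U}_*$ act as the identity on the subcategory $\on{Shv}(U\backslash X)_{\lambda,\on{sm}}$ of complexes smooth along $\Phi$ goes through with no change. Next I would upgrade Lemma \ref{lemma 1}: existence and uniqueness of the rank one $\lambda$-twisted local system $\ell_{H^+\backslash X}$ with $(f^+)^*\ell_{H^+\backslash X}\is\on{or}_{U\backslash X}$ follows because $f^+$ is smooth with contractible fibres. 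The manipulations in that proof use only the six-functor formalism, the projection formula, the formula $f^!\is f^*\otimes\on{or}_f[d_f]$ for smooth $f$, and Lemma \ref{p_*=p_!}, all of which carry over to twisted sheaves; the concluding appeal to \cite[Proposition 5.4.13]{KSa} concerns the singular support of the underlying constructible complex and is insensitive to the twist. The transversality statement of Example \ref{example transversal} is likewise purely about singular support of the underlying sheaf, so the image of $\on{oblv}_{H^-/U}:\on{Shv}(H^-\backslash X)_\lambda\to\on{Shv}(U\backslash X)_\lambda$ remains transversal with respect to $f^+$.

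The main obstacle I expect is the careful bookkeeping of twists for the various kernels and tensor operations. One must check that the pullbacks of the $(-\lambda,\lambda)$-twisting along $\Delta_{\calS}$, $\tilde\Delta$, $f\times\id$, $\tilde p_1$, and $\tilde p_2$ are canonically trivialized in a compatible way, so that each of the objects
\[(\Delta_\calS)_!(\bC_\calS)\in\on{Shv}(\calS\times\calS)_{-\lambda,\lambda},\quad \tilde\Delta_!(\on{or}_f)\in\on{Shv}(U\backslash X\times H^+\backslash X)_{-\lambda,\lambda},\]
and each step of the chain of isomorphisms rewriting $(-\otimes\ell_{H^+\backslash X})\circ\on{Av}_!^{H^+/U}(\mF)$ and $\on{Ps-Id}_{H^+\backslash X}\circ\on{Av}_*^{H^+/U}(\mF)$ live in the twisted category with the twist the statement predicts. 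Once this compatibility is recorded, the natural map analogous to \eqref{can arrow} is well-defined, Kashiwara--Schapira non-characteristic base change applies, and the proof closes exactly as in Theorem \ref{main 1}.
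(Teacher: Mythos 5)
Your proposal is correct and follows the paper's own approach: the paper's treatment of Theorem~\ref{main 1 twisted} consists entirely of the remark that ``all the discussion in the previous section works in the twisted setting,'' and you are fleshing out precisely what that remark requires---promoting Theorem~\ref{Matsuki}, Lemma~\ref{lemma 1}, and Example~\ref{example transversal} to the twisted category while tracking the cancellations of the $(-\lambda,\lambda)$-twists along the diagonal and projection maps. The only thing to add is that the paper defines $\ell_{H^+\backslash X}$ in the twisted setting as the unique rank one $\lambda$-twisted local system whose pullback agrees with $\mathcal{H}^{-d_{U\backslash X}}$ of the \emph{twisted} dualizing complex $\omega_{U\backslash X}\in\on{Shv}(U\backslash X)_\lambda$, so you should make sure your construction of $\ell_{H^+\backslash X}$ matches this definition rather than just the untwisted orientation sheaf.
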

Let
$\on{Se}_{H^+\backslash X}$ be the  
the Serre functors for $\on{Shv}(H^+\backslash X)_\lambda$. 
\begin{thm}\label{main 2 twisted}
There is a canonical isomorphism of functor $\on{Shv}(H^+\backslash X)_\lambda\ra \on{Shv}(H^+\backslash X)_\lambda$:
\[\on{Se}_{H^+\backslash X}\is\Upsilon_*^{H^-\ra H^+}\circ\Upsilon^{H^+\ra H^-}_*\circ(-\otimes\ell_{H^+\backslash X})[d_X-d_{U_c}].\].
\end{thm}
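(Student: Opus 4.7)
The plan is to deduce Theorem \ref{main 2 twisted} from Theorem \ref{main 1 twisted} by formally inverting, mirroring the strategy used in the untwisted Theorem \ref{main 2}. The only new input required is the verification that each of the ingredients in that formal inversion continues to hold in the twisted setting set up in Section \ref{twisted}.

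First I would invoke the ``Serre equals inverse of Drinfeld-Gaitsgory'' principle. Since $X$ has finitely many $H^+$-orbits (the finiteness is part of the definition of a Matsuki datum applied to the flag manifold), the stack $H^+\backslash X$ has finitely many isomorphism classes of points, so by \cite[Example 3.3.8 and Corollary 3.4.7]{AGKRRV2} we have
\[
\on{Se}_{H^+\backslash X}\is(\on{Ps-Id}_{H^+\backslash X})^{-1}
\]
on $\on{Shv}(H^+\backslash X)_\lambda$. The proof in \emph{loc.\ cit.} is phrased for $\on{Shv}(\calS)$ but uses only the six functor formalism and compact generation by !-pushforwards from points, all of which survive unchanged in the twisted setting thanks to the discussion in Section \ref{twisted}.

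Next I would apply Theorem \ref{main 1 twisted} and invert the right-hand side. Writing $F=(-\otimes\ell_{H^+\backslash X})\circ\Upsilon_!^{H^-\ra H^+}\circ\Upsilon_!^{H^+\ra H^-}[-d_X+d_{U_c}]$ we get
\[
\on{Se}_{H^+\backslash X}\is F^{-1}\is(\Upsilon_!^{H^+\to H^-})^{-1}\circ(\Upsilon_!^{H^-\to H^+})^{-1}\circ(-\otimes\ell_{H^+\backslash X})^{-1}[d_X-d_{U_c}].
\]
Matsuki duality (Theorem \ref{Matsuki}), applied to the datum $(G,X,H^+,H^-)$ and to its flip $(G,X,H^-,H^+)$, gives the identifications
\[
(\Upsilon_!^{H^+\to H^-})^{-1}\is\Upsilon_*^{H^-\to H^+},\qquad (\Upsilon_!^{H^-\to H^+})^{-1}\is\Upsilon_*^{H^+\to H^-}.
\]
Since $\ell_{H^+\backslash X}$ is a rank one local system with $\ell_{H^+\backslash X}\otimes\ell_{H^+\backslash X}\is\bC_{H^+\backslash X}$, the functor $(-\otimes\ell_{H^+\backslash X})$ is self-inverse. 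Combining these identifications yields exactly the formula claimed in Theorem \ref{main 2 twisted}.

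The only point that requires genuine verification, rather than formal manipulation, is that Matsuki duality is valid in the $\lambda$-twisted setting. The argument of Theorem \ref{Matsuki} proceeds by checking that the adjunction units/counits are isomorphisms after restricting along the fixed-point inclusion $X^\Phi\hookrightarrow X$, and that on sheaves smooth along the Morse trajectories the averaging functors act as the identity on hyperbolic restriction. Both statements are local on $X$ and hold sheaf-theoretically irrespective of the twisting one imposes on $H^\pm$-equivariance; concretely, the twisting is pulled back from $X$ and is trivialized along each trajectory of $\Phi$, so the hyperbolic restriction and averaging arguments of \cite[Theorem 5.2]{CY} go through verbatim. This is the step I expect to occupy the most space and is the main technical obstacle, but no new idea is needed beyond what is already in the excerpt.
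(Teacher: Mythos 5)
Your proof is correct and follows essentially the same approach as the paper: invoke the inverse relationship between $\on{Se}_{H^+\backslash X}$ and $\on{Ps-Id}_{H^+\backslash X}$ from \cite[Example 3.3.8, Corollary 3.4.7]{AGKRRV2}, apply Theorem \ref{main 1 twisted}, and invert using Theorem \ref{Matsuki} together with $\ell_{H^+\backslash X}\otimes\ell_{H^+\backslash X}\is\bC_{H^+\backslash X}$. The one place you go beyond the paper's terse "all the discussion works in the twisted setting" is in explicitly spelling out why twisted Matsuki duality holds, which is a sensible thing to verify even though the paper takes it for granted.
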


\quash{
\subsection{Variant of Proposition 3.5.5}
Let $H$ be a group acting on a smooth variety $X$.

\begin{definition}
A coherent (twisted) $\mD$-module $\mF$ on $X$ is called \emph{transversal} with respect to 
the projection $X\ra H\backslash X$ if the following holds: 
for every $H$-orbit $\mO$ on $X$,
the intersection $CV(\mathcal F)\cap T^*_{\mathcal O}X$ is contained in the zero section 
of $T^*X$. Here  $CV(\mathcal F)$ is the characteristic variety of $\mF$ and 
$T^*_{\mathcal O}X$ is the co-normal bundle of $\mO$.
\end{definition}
\begin{example}
Let $H_1, H_2$ be two groups acting on $X$. Assume that the two actions 
are transversal. Then any coherent $(H_1,\psi)$-equivariant $\mD$-module $\mF$
on $X$ (where $\psi$ is a character of $H_1$) is transversal with respect to $X\ra H_2\backslash X$. For example, the
Whittaker $\mD$-module 
$\mF$ on $X=G/B$ associated to a character $\psi$ of $N^-$
is transversal with respect to 
the projection $X\ra N\backslash X$.
\end{example}

\begin{proposition}
Let $\mF$ be a coherent (twisted) $\mD$-module on $X$. 
If $\mF$ is transversal with respect to the projection $X\ra H\backslash X$, then 
$\mF$ is ULA with respect to the projection $X\ra H\backslash X$.
\end{proposition}
\begin{proof}
Let $\mG$ be a coherent $\mD$-module on $H\backslash X$. We need to show that 
the tensor product $\mF\otimes^!\mG'$ is coherent $\mD$-module, where $\mG'$ is the pull back of $\mG$ to 
$X$. Note that the transversal property of $\mF$  implies 
$CV(\mF)\cap CV(\mG')$ is the zero section of $T^*X$.
It implies the diagonal map $\Delta:X\ra X\times X$ is non-characteristic with respect to $\mF\boxtimes\mG'$ and the coherence of $\mF\otimes^!\mG'=\Delta^!(\mF\boxtimes\mG')$
follows from \cite[Theorem 2.4.6]{HTT}. 
\end{proof}

\subsection{Application of Theorem 3.2.5}
Let $\psi$ be a non-degenerate character of $N^-$ and let 
$\on{D-mod}((N^-,\psi)\backslash X)$ be the category of Whittaker $D$-modules on $X$.

\begin{proposition}
There is an isomorphism of functors $\on{D-mod}((N^-,\psi)\backslash X)\ra\on{D-mod}(N\backslash X)$:
\[\on{Av}_!^N[\dim N]\is\on{Av}_{*}^N[-\dim N].\]
\end{proposition}
\begin{proof}
Note that objects in $\on{D-mod}((N^-,\psi)\backslash X)$ are ULA with respect to 
$X\ra N\backslash X$, hence by Theorem 3.2.5, we have a canonical isomorphism of functors
$\on{D-mod}((N^-,\psi)\backslash X)\ra\on{D-mod}(N\backslash X)$:
\[\on{Av}_!^N\is\on{Ps-Id}_{N\backslash X}\circ\on{Av}_{*}^N[2\dim N].\]
Now the proposition follows from the claim that the functor 
$\on{Ps-Id}_{N\backslash X}[4\dim N]$ is isomorphic to the identity functor on the image of 
$\on{Av}_*^N:\on{D-mod}((N^-,\psi)\backslash X)\ra\on{D-mod}(N\backslash X)$.
To prove the claim we observe that 
the image of $\on{Av}_*^N$ is the subcategory whose objects are direct sums of the 
maximal projective object 
$P_e\in\on{D-mod}(N\backslash X)$ (a projective cover of $\delta_e$) and the desired claim follows 
from $\on{Ps-Id}_{N\backslash X}[2\dim N](P_e)\is (\Upsilon^-)^{-1}\circ\Upsilon^{-1}(P_e)\is P_e[-2\dim N]$ (see \cite[\S 2.5]{BBM}).
\end{proof}
\begin{remark}
For each choice of an isomorphism $\on{Ps-Id}_{N\backslash X}(P_e)\is P_e[-]$, we get 
a \emph{canonical} isomorphism $\on{Av}_!^N[\dim N]\is\on{Av}_{*}^N[-\dim N]$.
\end{remark}
Consider the averaging functor 
\[\on{Av}_\psi^{N^-}:\on{D-mod}(N\backslash X)\ra\on{D-mod}((N^-,\psi)\backslash X),
\ \on{Av}_\psi^{N^-}(\mF):=a_*(e^\psi\boxtimes\mF')\]
here $\mF'$ is the pull-back of $\mF$ to $X$, $e^\psi$ is the exponential 
character $D$-module on $N^-$ associated to $\psi$,
and 
$a:N^-\times X\ra X$ is the action map.

\begin{corollary}
There is an isomorphism of functors $\on{D-mod}(N\backslash X)\ra\on{D-mod}(N\backslash X)$:
\[\on{Av}_!^N\circ\on{Av}_\psi^{N^-}[\dim N]\is\on{Av}_{*}^N\circ\on{Av}_\psi^{N^-}[-\dim N].\]
\end{corollary}

\begin{remark}
The proposition and corollary above provide an answer for a question in \cite{BM} (see remark 2 and question on p.3).
\end{remark}}

\end{document}